%\begin{filecontents*}{example.eps}
%%!PS-Adobe-3.0 EPSF-3.0
%%%BoundingBox: 19 19 221 221
%%%CreationDate: Mon Sep 29 1997
%%%Creator: programmed by hand (JK)
%%%EndComments
%gsave
%newpath
%  20 20 moveto
%  20 220 lineto
%  220 220 lineto
%  220 20 lineto
%closepath
%2 setlinewidth
%gsave
%  .4 setgray fill
%grestore
%stroke
%grestore
%\end{filecontents*}

%\RequirePackage{fix-cm}

\documentclass[10pt, a4paper]{article}

\usepackage{natbib}
\bibpunct{[}{]}{,}{n}{}{;}

\usepackage{amsmath}
\usepackage{amsthm}
\usepackage{amssymb}
\usepackage{amsfonts}

\theoremstyle{plain}
\newtheorem{theorem}{Theorem}[section]
\newtheorem{lemma}[theorem]{Lemma}

\theoremstyle{definition}
\newtheorem{problem}{Problem}

\theoremstyle{remark}
\newtheorem{remark}{Remark}

\usepackage{csquotes}
\usepackage{array, multirow}
\usepackage{enumerate}
\usepackage{url}

\renewcommand{\setminus}{\smallsetminus}

\newcommand{\cpl}{\mathbb{P}^1}
\newcommand{\cpp}{\mathbb{P}^2}
\newcommand{\Aut}{\operatorname{Aut}}
\newcommand{\ord}{\operatorname{ord}}

\begin{document}

\title{On the Alexander Invariants of Trigonal Curves
%\thanks{The author was partially supported by the T\"UB\.ITAK grant 118F413.}
}
%\titlerunning{Short form of title}        % if too long for running head

\author{Melih \"U\c{c}er}
%\authorrunning{Short form of author list} % if too long for running head

%\institute{Melih \"U\c{c}er\at Bilkent University\\ \email{melih.ucer@bilkent.edu.tr} 
%\and Melih \"U\c{c}er\at Ankara Y\i ld\i r\i m Beyaz\i t University\\ \email{mucer@ybu.edu.tr}}

%\institute{F. Author \at
%              first address \\
%              Tel.: +123-45-678910\\
%              Fax: +123-45-678910\\
%              \email{fauthor@example.com}           %  \\
%%             \emph{Present address:} of F. Author  %  if needed
%           \and
%           S. Author \at
%              second address
%}

%\date{Received: date / Accepted: date}
% The correct dates will be entered by the editor

\maketitle

\begin{abstract}
\noindent
We show that most of the genus-zero subgroups of the braid group $\mathbb{B}_3$ (which are roughly the braid monodromy groups of the trigonal curves on the Hirzebruch surfaces) are irrelevant as far as the Alexander invariant is concerned: there is a very restricted class of \enquote{primitive} genus-zero subgroups such that these subgroups and their genus-zero intersections determine all the Alexander invariants. Then, we classify the primitive subgroups in a special subclass. This result implies the known classification of the dihedral covers of irreducible trigonal curves.
%\keywords{Trigonal curve, Alexander invariant, braid monodromy, Burau representation, modular group, dessin d'enfant.}
%\subclass{Primary: 14F35; Secondary: 14H50, 20F36, 14H30, 14H57.}
\end{abstract}

\section{Introduction}

In this paper, all varieties are over $\mathbb{C}$. Specifically, $\mathbb{P}^1$ denotes the Riemann sphere $\mathbb{C}\cup \{\infty\}$. Moreover, let $\Lambda := \mathbb{Z}[t, t^{-1}]$, and let $\Gamma := PSL(2, \mathbb{Z})$.

\subsection{Motivation}

Let $C\subset\cpp$ be a curve. The fundamental group $\pi_1(\cpp\setminus C)$ is an important invariant of $C$. It has been subject of interest since Zariski \citep{zariski}, yet its structure is still not well understood in general. As the singularities of $C$ grow, $\pi_1(\cpp\setminus C)$ gets more complicated. A precise statement in the direction of this principle is that under certain upper bounds on the singularities, $\pi_1(\cpp\setminus C)$ is abelian \citep{nori}. In this case, $\pi_1(\cpp\setminus C) = H_1(\cpp\setminus C)$, hence it is known due to Poincar\'{e} duality.

Another important invariant is the (conventional) \emph{Alexander invariant} $A_C^\mathrm{c}$ defined as follows: Let $d$ be the degree of $C$; then there is a canonical epimorphism $\operatorname{lk}\colon \pi_1(\cpp\setminus C) \twoheadrightarrow \mathbb{Z}_d$ which takes a loop to its linking coefficient with $C$. Then,
\begin{equation*}
A_C^\mathrm{c} := K\slash K', \qquad K := \operatorname{Ker}(\operatorname{lk}).
\end{equation*}
There is a canonical $\Lambda$-module structure on $A_C^\mathrm{c}$, where $t$ acts as conjugation by an element in $\operatorname{lk}^{-1}(1)$. Note that $t^d = 1$. Even though the Alexander invariant is simpler than the fundamental group, its structure is not fully known in general either. However, there are some general results which show that the Alexander invariant of a plane curve is significantly more restricted than that of knots.

The \emph{Alexander polynomial} $\Delta_C(t)$ is defined as the order of the $\mathbb{C}[t, t^{-1}]$-module $A_C^\mathrm{c}\otimes\mathbb{C}$, i.e. it is the characteristic polynomial of the $t$-action on the underlying vector space of $A_C^\mathrm{c}\otimes\mathbb{C}$. In fact, $\Delta_C(t)$ is the only isomorphism invariant of $A_C^\mathrm{c}\otimes\mathbb{C}$, because $t^d=1$ implies that all Jordan blocks of the $t$-action have size 1. The roots of $\Delta_C(t)$ are roots of unity with orders dividing $d$ (also because of $t^d=1$). In contrast, the roots of $\Delta_K(t)$ need not be roots of unity for a knot $K$. Moreover, if $C$ is irreducible, the order of a root of $\Delta_C(t)$ cannot be a prime power \citep{zariski-alexander}. Another general theorem on the Alexander polynomial is the following upper bound \citep{libgober}: Let $\{L_1, L_2, \ldots, L_n\}$ be the links cut by the singularities of $C$. Then,
\begin{equation*}
\Delta_C(t) \mid \prod_{1\leq i\leq m} \Delta_{L_i}(t).
\end{equation*}
Note that this theorem also illustrates the principle that these invariants get more complicated as the singularities of $C$ grow. 

In addition to these general theorems, there is a number of particular curves $C$ for which the invariants have been computed. For example, $\pi_1(\cpp\setminus C)$ is known for all $C$ with $d\leq 5$ \citep[see][]{degtyarev-quintics}. On the other hand, there are formulae which express $\Delta_C(t)$ in terms of the cohomology of certain linear systems, thus $\Delta_C(t)$ can be computed without topological methods \citep[e.g.][]{oka-review}. The textbook \citep{dimca} is a good source of information on this subject. 

\subsection{Monodromy Alexander Invariant}

In this paper, we study the so-called \emph{monodromy Alexander invariants} of trigonal curves, based on the results of Degtyarev \citep{degtyarev} on the braid monodromy of these curves.

Let $\Sigma = \Sigma_m$ be a Hirzebruch surface, let $E\subset\Sigma$ denote the exceptional section, and let $C\subset\Sigma$ be a curve. Note that the context of Hirzebruch surfaces is more general than that of $\cpp$, since $\cpp$ blown up at a point is $\Sigma_1$. Let $C$ be $n$-gonal, i.e. $C$ intersects a generic fiber of the ruling at $n$ points, and let the \emph{degree} of $C$ refer to $d := nm+[C\cdot E]$. Then, there is a canonical linking coefficient epimorphism $\operatorname{lk}\colon \pi_1(\Sigma\setminus (C\cup E)) \twoheadrightarrow \mathbb{Z}_d$ and the conventional Alexander invariant $A_C^\mathrm{c}$ is defined in terms of $\operatorname{lk}$ as in the case of plane curves. For a generic fiber $F_\infty$ of the ruling, there is a canonical epimorphism $\overline{\operatorname{lk}}\colon \pi_1(\Sigma\setminus (C\cup E\cup F_\infty)) \twoheadrightarrow \mathbb{Z}$, and $A_C^\mathrm{c}$ can be equivalently defined in terms of $\overline{\operatorname{lk}}$ instead of $\operatorname{lk}$, since there is an induced isomorphism $\operatorname{Ker}(\overline{\operatorname{lk}}) = \operatorname{Ker}(\operatorname{lk})$. Consider a plane curve $C\subset\cpp$ and let $C'\subset\Sigma_1$ be its proper transform; then $A_C^\mathrm{c} = A_{C'}^\mathrm{c}$.

Consider the ruling $\Sigma\rightarrow B\cong \cpl$. For the image $b_\infty\in B$ of $F_\infty$, the restricted projection $\Sigma\setminus (C\cup E\cup F_\infty)\rightarrow B\setminus\{b_\infty\}$ is topologically a fiber bundle away from finitely many singular fibers. Let $F_0 \neq F_\infty$ be another generic fiber of the ruling, then $F^\circ := F_0\setminus (C\cup E)$ is a fiber of this bundle. Clearly, $F^\circ$ is homeomorphic to a disk with $n$ punctures, so that $\pi_1(F^\circ)\cong F_n$, the free group on $n$ generators. Let $\{b_1, b_2, \ldots, b_k\}\subset B$ denote the image of the singular fibers. Then, there is a monodromy action $\pi_1(B\setminus \{b_\infty, b_1, b_2, \ldots, b_k\}) \rightarrow \Aut(\pi_1(F^\circ))\cong\Aut(F_n)$, whose image $M_C$ is called the \emph{braid monodromy group} of $C$. With some semi-standard choices, one has $M_C\subset\mathbb{B}_n\cdot\operatorname{Inn}(F_n)\subset\Aut(F_n)$ (see Section~\ref{sec:braid}). But since the choices are not unique, $M_C$ is well-defined only up to conjugation by $\mathbb{B}_n$. The Zariski-van Kampen theorem states
\begin{equation*}
\pi_1(\Sigma\setminus (C\cup E\cup F_\infty)) = F_n\slash\langle \alpha = m(\alpha) \mid \alpha\in F_n,\ m\in M_C\rangle.
\end{equation*}
Let $\operatorname{u}\colon F_n\twoheadrightarrow\mathbb{Z}$ be such that $\operatorname{u}$ maps each generator to 1. Then, $\operatorname{u}$ is the composition of $\overline{\operatorname{lk}}$ with the quotient epimorphism $F_n\twoheadrightarrow \pi_1(\Sigma\setminus (C\cup E\cup F_\infty))$.%Monodromy around each singular fiber induces an automorphism on $\pi_1(F^\circ)$.

Consider the $\Lambda$-module $A_n$ defined in terms of $\operatorname{u}$ in the same way that $A_C^\mathrm{c}$ is defined in terms of $\overline{\operatorname{lk}}$:
\begin{equation*}
A_n := K_n\slash K_n', \qquad K_n:= \operatorname{Ker}(\operatorname{u}).
\end{equation*}
Here, $M_C$ acts on $A_n\cong \Lambda^{n-1}$ via the \emph{Burau representation}, namely the induced action $\mathbb{B}_n\cdot\operatorname{Inn}(F_n)\rightarrow \Aut(A_n) = GL(n-1, \Lambda)$. The Zariski-van Kampen theorem motivates the definition of the \emph{monodromy Alexander invariant $A_C$}:
\begin{equation*}
A_C = \Lambda^{n-1}\slash\langle u = m(u) \mid u\in \Lambda^{n-1},\ m\in M_C\rangle.
\end{equation*}
There is a canonical epimorphism $A_C\twoheadrightarrow A_C^\mathrm{c}$, which is often (though not always) an isomorphism \citep{degtyarev}. Thus, as long as \enquote{upper bounds} are concerned, it suffices to classify the monodromy Alexander invariants. On the other hand, the latter is easier to compute than the conventional invariant, because it depends only on the image $H_C$ of $M_C\rightarrow GL(n-1, \Lambda)$, which we call the \emph{Burau monodromy group} of $C$. Note that $H_C$ is well-defined up to conjugacy as a subgroup of the image $\mathrm{Bu}_n$ of the Burau representation, which we call the \emph{Burau group}. In fact, for any subgroup $H\subset\mathrm{Bu}_n$, we can define
\begin{equation*}
A(H) := \Lambda^{n-1}\slash V(H), \qquad V(H) := \langle (h-1)\cdot u \mid u\in \Lambda^{n-1}, h\in H\rangle.
\end{equation*}
Consequently, $A_C = A(H_C)$. Clearly, the ambiguity in $H_C$ does not affect $A_C$ up to isomorphism.%because for any $b\in\mathrm{Bu}_3$, one has $V(bHb^{-1}) = b\cdot V(H)$, hence $A(bHb^{-1})\cong A(H)$.

From now on, we consider only the trigonal curves (the case $n=3$), whose Burau monodromy groups are almost completely characterized in Theorem~\ref{thm:degt-mdmy} below. This case is a borderline: the case $n\leq 2$ is quite easy, and the case $n\geq 4$ appears very difficult as of now. We ignore the very special case of \emph{isotrivial} trigonal curves, which have constant $j$-invariant on all fibers. There is a canonical epimorphism $\operatorname{c}\colon \mathrm{Bu}_3 \rightarrow \Gamma$ which is roughly defined by evaluation of a matrix at $t=-1$ (see Section~\ref{sec:braid}). A finite-index subgroup $H\subset\mathrm{Bu}_3$ is called \emph{genus-zero} if $\operatorname{c}(H)$ is genus-zero as a subgroup of the modular group $\Gamma$.%Note that any subgroup which contains a genus-zero subgroup is itself genus-zero (see Remark~\ref{rem:genus}).
\begin{theorem}[Degtyarev \citep{degtyarev}]
\label{thm:degt-mdmy}
Let $C\subset\Sigma$ be a non-isotrivial trigonal curve; then, $H_C$ is genus-zero. For a partial converse, let $H\subset\mathbb{B}_3\subset\mathrm{Bu}_3$ be a genus-zero subgroup. Then, there is a non-isotrivial trigonal curve $C$ such that $H = H_C$.
\end{theorem}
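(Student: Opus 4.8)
\emph{The plan.} Both implications will be organized around the central extension
\[ 1 \longrightarrow Z(\mathbb{B}_3) \longrightarrow \mathbb{B}_3 \xrightarrow{\ \operatorname{c}\ } \Gamma \longrightarrow 1, \]
where $Z(\mathbb{B}_3)=\langle\Delta^2\rangle\cong\mathbb{Z}$ is generated by the full twist, together with the following fact, which is at the heart of Degtyarev's analysis and which I take as the starting point: for a non-isotrivial trigonal curve $C\subset\Sigma$, composing the braid monodromy $\pi_1(B\setminus\{b_\infty,b_1,\dots,b_k\})\to\mathbb{B}_3$ with $\operatorname{c}$ recovers the monodromy of the Jacobian elliptic fibration of $C$, i.e.\ the monodromy of $j_C\colon B\to\mathbb{P}^1_j$ relative to the three special values $0,1728,\infty$, which naturally takes values in $\Gamma$. (This is essentially the statement that the Burau representation evaluated at $t=-1$ is the monodromy action on $H_1$ of the double cover of a fiber branched at the four points $(C\cup E)\cap F$.) Consequently $\operatorname{c}(H_C)$ is, up to conjugation, the subgroup of $\Gamma$ cut out by this covering datum, and the associated modular curve $X(\operatorname{c}(H_C))$ receives a non-constant morphism from $B$.

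\emph{Forward implication.} Since $\Sigma$ is a Hirzebruch surface we have $B\cong\mathbb{P}^1$, and $j_C$ is non-constant by non-isotriviality; hence $\operatorname{c}(H_C)$ has finite index in $\Gamma$, and a short additional argument — using that the product of the braid monodromies around the singular fibers is a power $\Delta^{2N}$ of the full twist with $N>0$ — shows that $H_C$ itself has finite index in $\mathrm{Bu}_3$, so that the label \emph{genus-zero} is meaningful for it. Now $B\cong\mathbb{P}^1$ dominates $X(\operatorname{c}(H_C))$, so Riemann--Hurwitz forces $g\big(X(\operatorname{c}(H_C))\big)\leq g(B)=0$; that is, $\operatorname{c}(H_C)$ is genus-zero, hence so is $H_C$.

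\emph{Converse (partial) implication.} Let $H\subset\mathbb{B}_3$ be genus-zero and set $G:=\operatorname{c}(H)\subset\Gamma$, a finite-index genus-zero subgroup. Then $X(G)$ has genus zero, so $X(G)\cong\mathbb{P}^1=:B$, and the natural projection $j_G\colon B\to\mathbb{P}^1_j$ is a non-constant rational function whose monodromy realizes $G$. I would first build an auxiliary non-isotrivial trigonal curve $C_0\subset\Sigma_m$ (for a suitable $m$) with $j_{C_0}=j_G$, by realizing $j_G$ via an elliptic surface over $\mathbb{P}^1$ (Kodaira's construction, or an explicit Weierstrass model) and passing to the associated trigonal curve. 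Choosing the base point and the ordering of the three punctures compatibly keeps the braid monodromy of $C_0$ inside $\mathbb{B}_3$ and kills the conjugacy ambiguity, so that $\operatorname{c}(H_{C_0})=G$ exactly. It remains to reconcile the central parts: $H$ and $H_{C_0}$ are both lifts of $G$ along the displayed exact sequence, so they differ only by how much of $Z(\mathbb{B}_3)=\langle\Delta^2\rangle$ each contains, together with a splitting ambiguity. I would correct this by finitely many surgeries on $C_0$ that leave $j_{C_0}$ unchanged — chiefly the insertion of an auxiliary singular fiber of Kodaira type $I_0^*$ (type $\widetilde{D}_4$ in Degtyarev's notation), whose local braid monodromy is the central element $\Delta^{\pm2}$: each such insertion augments the monodromy group by $\Delta^2$ without touching $\operatorname{c}(H_C)$, and combining these (and, if needed, a change of the Hirzebruch base $\Sigma_{m'}$ to tune the residual $\Delta^{2\ast}$-content) yields a curve $C$ with $H_C=H$. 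The genus-zero hypothesis enters precisely here: it is exactly the planarity of the skeleton (the dessin of $j_G$) that makes Degtyarev's patchwork construction of $C$ over $B\cong\mathbb{P}^1$ available.

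\emph{The main obstacle.} The crux is this last step of the converse — a \emph{realization lemma} asserting that the geometric surgeries available on trigonal curves generate exactly the set of all finite-index subgroups $H\subset\mathbb{B}_3$ lying over a prescribed genus-zero $G\subset\Gamma$. Making this precise requires (i) computing which central power $\Delta^{2N}$ is forced into $H_{C_0}$ by a minimal model and how $N$ changes under the surgeries; (ii) checking that no residual obstruction — of $H^1(G,\mathbb{Z}/\cdot)$-type, or a parity constraint coming from the exceptional section $E$ — prevents hitting an arbitrary prescribed lift; and (iii) confirming that the whole construction can be kept inside $\mathbb{B}_3$ rather than only in $\mathbb{B}_3\cdot\operatorname{Inn}(F_3)$. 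The remaining ingredients — realizing a prescribed $j$-invariant by a trigonal curve, the Riemann--Hurwitz genus comparison, and the dictionary between local fiber types and local braids — are, in the trigonal case, standard once the skeleton framework of Section~\ref{sec:braid} is in place.
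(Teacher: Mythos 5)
First, a point of comparison that matters for this exercise: the paper does not prove this statement at all. It is quoted from Degtyarev \citep{degtyarev} and used as a black box, so there is no in-paper proof to measure your attempt against. Judged on its own, your forward implication has the right shape: granting your ``starting point'' (that $\operatorname{c}$ composed with the braid monodromy is the homological invariant of the associated Jacobian elliptic fibration --- itself a nontrivial imported theorem), the lift of $j_C$ to a non-constant map $B\cong\mathbb{P}^1\to X_{\operatorname{c}(H_C)}$ does force $g(\operatorname{c}(H_C))=0$, and the presence of a positive power of the full twist in $M_C$ upgrades finite index of $\operatorname{c}(H_C)$ to finite index of $H_C$. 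That half is a reasonable sketch.

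The genuine gap is in the converse, and it is exactly the step you flag as ``the main obstacle'': the realization of an \emph{arbitrary} prescribed finite-index lift $H\subset\mathbb{B}_3$ of a genus-zero $G\subset\Gamma$ is the entire mathematical content of the converse, and your sketch does not establish it. Two concrete defects. (a) Your proposed surgery --- inserting a $\tilde{D}_4$ fiber with local monodromy $\Delta^{\pm2}$ --- adjoins the central element $\Delta^2$ to the monodromy group. Since $\operatorname{Ker}\bigl(\operatorname{c}|_{\mathbb{B}_3}\bigr)=\langle\Delta^2\rangle=\langle\mathbb{X}^3\rangle$, a single such insertion replaces $H_{C_0}$ by the \emph{full} preimage of $G$ in $\mathbb{B}_3$; surgeries of this kind can only enlarge the central content and in fact collapse all lift data, so they cannot steer $H_{C_0}$ toward a proper lift $H$, nor can anything shrink $H_{C_0}\cap Z(\mathbb{B}_3)$ if the auxiliary curve starts with too much of the center. (b) Even with the correct central content, a lift $H$ of $G$ is determined by strictly more than the pair $\bigl(G,\,H\cap Z(\mathbb{B}_3)\bigr)$ --- in the language of this paper, by the full type specification at the monovalent vertices and regions of the skeleton of $G$ --- and you provide no mechanism for prescribing these local data geometrically. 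Degtyarev's actual argument builds $C$ directly from the dessin of $G$ decorated with an admissible type specification (reading off a Weierstrass model from the skeleton), rather than by correcting an auxiliary curve; absent that construction, or a proof of your items (i)--(iii), the converse remains unproved.
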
%The key point in Degtyarev's proof of Theorem~\ref{thm:degt-mdmy} is the construction of a ramified covering from $B\cong\cpl$ to the modular curve of $\operatorname{c}(H_C)$.
In view of Theorem~\ref{thm:degt-mdmy}, the main question considered in this work can be approximately formulated as follows:
\begin{problem}
\label{prob:main}
Classify the modules of the form $A(H)$ for genus-zero $H\subset\mathrm{Bu}_3$.
\end{problem}
%We say that the formulation is approximate because Theorem~\ref{thm:degt-mdmy} has no converse statement for the genus-zero subgroups of $\mathrm{Bu}_3$ which are not contained in $\mathbb{B}_3$. We leave this point out of the scope of this paper, and concentrate on Problem~\ref{prob:main}.

Degtyarev \citep{degtyarev} gave some partial answers to Problem~\ref{prob:main}. For example, he determined all possible eigenvalues of the $t$-action on the underlying vector space of $A(H)\otimes \mathbb{Q}$ and $A(H)\otimes \mathbb{F}_p$ for any prime $p$. For another example, he determined abelian groups of the form $A(H)\slash (t+1)$. This second example is the classification of the dihedral covers of trigonal curves.

\subsection{Principal Results}
\label{sec:result}

Given $H\subset\mathrm{Bu}_3$, the module $A(H)$ is equipped with an epimorphism $\Lambda^2\twoheadrightarrow A(H)$, which is always understood but usually omitted from notation. Conversely, given any module $A$ with an epimorphism $\phi\colon \Lambda^2\twoheadrightarrow A$, we can define the subgroup
\begin{equation*}
H(A) = H(\phi) := \{h\in\mathrm{Bu}_3\mid (h-1)\cdot\Lambda^2 \subset \operatorname{Ker}(\phi) \}.
\end{equation*}
Two epimorphisms $\phi_1, \phi_2$ are called \emph{Burau equivalent} if there exists $b\in\mathrm{Bu}_3$ such that $\operatorname{Ker}(\phi_1) = b\cdot \operatorname{Ker}(\phi_2)$. In this case, $H(\phi_1)$ and $H(\phi_2)$ are conjugate in $\mathrm{Bu}_3$.

For a maximal ideal $\mathfrak{m}\subset\Lambda$, the word \emph{$\mathfrak{m}$-local} refers to a non-trivial $\Lambda$-module annihilated by $\mathfrak{m}^n$ for sufficiently large $n$. Note that any maximal ideal $\mathfrak{m}$ is given in the form $\mathfrak{m} = \langle p, \psi(t) \rangle$ for a prime $p$ and a polynomial $\psi(t)$ which is irreducible modulo~$p$. Also note that any \emph{local} module is finite. Our Theorem~\ref{thm:setup} below suggests that Problem~\ref{prob:main} reduces to the classification of the genus-zero subgroups of the form $H(A)$ for local modules $A$. Then, our main Theorem~\ref{thm:main} classifies these subgroups for $\mathfrak{m}$-local $A$ for a special class of maximal ideals $\mathfrak{m}$. Only one subgroup from each conjugacy class is shown in the classification, since conjugate subgroups come from \emph{Burau equivalent} modules.
\begin{theorem}
\label{thm:setup}
Let $H\subset\mathrm{Bu}_3$ be a subgroup, and let $A$ be a module equipped with an epimorphism $\Lambda^2\twoheadrightarrow A$.
\begin{enumerate}
\item\label{thm:setup:closure} We have $H\subset H(A(H)), \enskip A(H) = A(H(A(H))), \enskip H(A) = H(A(H(A)))$.
\item\label{thm:setup:fin-rep} If $H(A)$ is finite-index, there is a finite quotient $A'$ of $A$ with $H(A) = H(A')$.
\item\label{thm:setup:decompose} If $A$ is finite, there is a decomposition $A = \bigoplus A_\mathfrak{m}$ into local modules $A_\mathfrak{m}$.
\item\label{thm:setup:intersect} If $A = \bigoplus A_i$ for some modules $A_i$, then $ H(A) = \bigcap H(A_i)$.
\end{enumerate}
\end{theorem}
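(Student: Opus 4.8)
The plan is to treat parts (\ref{thm:setup:closure}), (\ref{thm:setup:decompose}) and (\ref{thm:setup:intersect}) as formal statements — a Galois connection together with two pieces of routine commutative algebra — and to concentrate the work on part (\ref{thm:setup:fin-rep}). \emph{Part (\ref{thm:setup:closure}).} Identifying a module $A$ (with its epimorphism) with the submodule $N:=\operatorname{Ker}(\Lambda^2\twoheadrightarrow A)\subseteq\Lambda^2$, the operators $A\mapsto H(A)$ and $H\mapsto V(H)$ (so that $A(H)=\Lambda^2/V(H)$) form a monotone Galois connection between the subgroups of $\mathrm{Bu}_3$ and the $\Lambda$-submodules of $\Lambda^2$: since $V(H)$ is by definition generated by the elements $(h-1)u$, one has $V(H)\subseteq N$ iff $H\subseteq H(N)$. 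The three identities are the standard consequences. Explicitly: $H\subseteq H(A(H))$ because $(h-1)\Lambda^2\subseteq V(H)$ for $h\in H$; then $V(H)\subseteq V(H(A(H)))$ by monotonicity, while every generator $(h-1)u$ of $V(H(A(H)))$ with $h\in H(A(H))$ already lies in $V(H)$, whence $A(H)=A(H(A(H)))$; and $H(A)=H(A(H(A)))$ follows from the first identity applied to the subgroup $H(A)$, together with the inclusion $V(H(A))\subseteq\operatorname{Ker}(\Lambda^2\twoheadrightarrow A)$ (immediate from the definition of $H(A)$) and monotonicity of $H(-)$.

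\emph{Parts (\ref{thm:setup:decompose}) and (\ref{thm:setup:intersect}).} For (\ref{thm:setup:decompose}): since $A$ is finite, its annihilator $I\subseteq\Lambda$ has finite index (consider the action of $\lambda\in\Lambda$ on a finite generating set of the abelian group $A$), so $\Lambda/I$ is a finite, hence Artinian, commutative ring, and therefore a finite product $\prod_\mathfrak{m}R_\mathfrak{m}$ of local Artinian rings indexed by the finitely many maximal ideals $\mathfrak{m}\supseteq I$. The corresponding orthogonal idempotents decompose $A=\bigoplus_\mathfrak{m}A_\mathfrak{m}$ with $A_\mathfrak{m}$ a module over $R_\mathfrak{m}$; such a module is annihilated by a power of the maximal ideal of $R_\mathfrak{m}$, i.e.\ by $\mathfrak{m}^n$ for $n\gg 0$, so the nonzero summands are $\mathfrak{m}$-local, each carrying the epimorphism obtained from $\Lambda^2\twoheadrightarrow A$ by composition with the projection. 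For (\ref{thm:setup:intersect}): writing the given epimorphism as $\phi\colon\Lambda^2\twoheadrightarrow A=\bigoplus_iA_i$ with surjective components $\phi_i\colon\Lambda^2\twoheadrightarrow A_i$, one has $\operatorname{Ker}(\phi)=\bigcap_i\operatorname{Ker}(\phi_i)$, so $(h-1)\Lambda^2\subseteq\operatorname{Ker}(\phi)$ iff $(h-1)\Lambda^2\subseteq\operatorname{Ker}(\phi_i)$ for every $i$; this is exactly $H(A)=\bigcap_iH(A_i)$.

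\emph{Part (\ref{thm:setup:fin-rep}).} This is the one statement with real content. The key input is that the image of the full twist $(\sigma_1\sigma_2)^3\in\mathbb{B}_3$ under the reduced Burau representation is the scalar matrix $t^3\cdot\mathrm{I}\in\mathrm{Bu}_3$ — a short matrix check. Put $H:=H(A)$ and suppose $[\mathrm{Bu}_3:H]<\infty$. Then $H\cap\langle t^3\mathrm{I}\rangle$ has finite index in the infinite cyclic group $\langle t^3\mathrm{I}\rangle$, so $t^{3N}\mathrm{I}\in H$ for some $N\geq 1$; hence $(t^{3N}-1)\Lambda^2\subseteq V(H)\subseteq\operatorname{Ker}(\phi)$, so $A$ is a quotient of $\Lambda^2/(t^{3N}-1)\Lambda^2$, which is free of finite rank over $\mathbb{Z}$. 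Therefore $A$ is a finitely generated abelian group, so $\bigcap_{n\geq 1}nA=0$. Pick coset representatives $g_1=1,g_2,\dots,g_k$ of $H$ in $\mathrm{Bu}_3$. For $j\geq 2$ we have $g_j\notin H(A)$, so the image $W_j$ of $(g_j-1)\Lambda^2$ in $A$ is nonzero; choose $0\neq w_j\in W_j$ and $n_j\geq 1$ with $w_j\notin n_jA$, set $n:=\operatorname{lcm}(n_2,\dots,n_k)$, and let $A':=A/nA$, a finite quotient of $A$. Using $(g_jh-1)u=(g_j-1)(hu)+(h-1)u$ and $(h-1)\Lambda^2\subseteq\operatorname{Ker}(\phi)$ for $h\in H$, one sees that for $g$ in the coset $g_jH$ the inclusion $(g-1)\Lambda^2\subseteq\operatorname{Ker}(\Lambda^2\twoheadrightarrow A')$ is equivalent to $W_j\subseteq nA$, which fails since $w_j\notin nA$. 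Hence no $g\notin H$ lies in $H(A')$; as $H(A)\subseteq H(A')$ always, $H(A')=H(A)$.

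The only genuine obstacle I anticipate is in part (\ref{thm:setup:fin-rep}): one must locate a usable scalar element of $\mathrm{Bu}_3$ (the twist above) so as to force $A$ to be finitely generated over $\mathbb{Z}$ once $H(A)$ has finite index, and then run the coset bookkeeping that certifies the truncation $A\twoheadrightarrow A/nA$ does not enlarge $H(-)$. Parts (\ref{thm:setup:closure}), (\ref{thm:setup:decompose}) and (\ref{thm:setup:intersect}) are soft.
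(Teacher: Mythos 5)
Your proposal is correct and, on the one item the paper actually proves (item~\ref{thm:setup:fin-rep}), it follows essentially the same route: a scalar power of $t$ lying in the finite-index subgroup forces $A$ to be a finitely generated abelian group, and one then truncates to $A/nA$ for an $n$ avoiding a finite set of nonzero witnesses --- the paper indexes these witnesses by the $\mathrm{Bu}_3$-orbit of $(e_1,e_2)$ rather than by coset representatives, which is the same bookkeeping in different clothing. The paper declares items~\ref{thm:setup:closure}, \ref{thm:setup:decompose}, \ref{thm:setup:intersect} obvious or standard and omits their proofs; the arguments you supply for them are correct.
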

It appears that very few intersections of the form $H(A_1)\cap H(A_2)$ are genus-zero, contributing to the list of Alexander invariants of trigonal curves \citep{degtyarev}.
\begin{theorem}[Main]
\label{thm:main}
Let $\mathfrak{m} := \langle p, t+1\rangle\subset\Lambda$ with $p\neq 2$, and let $A$ be an $\mathfrak{m}$-local module equipped with an epimorphism $\Lambda^2\twoheadrightarrow A$. If $H(A)\subset\mathrm{Bu}_3$ is genus-zero, then it is in one of the conjugacy classes listed in Table~\ref{tab:groups} on page~\pageref{tab:groups}.
\end{theorem}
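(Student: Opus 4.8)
The plan is to convert the assertion into a finite computation about subgroups of $\Gamma$, in four stages: reduce to a finite level, translate \enquote{genus-zero} into modular data using the structure of $\mathrm{Bu}_3$, bound the level via the genus formula, and enumerate the survivors. A genus-zero subgroup is finite-index by definition, so $H(A)$ is finite-index; by Theorem~\ref{thm:setup}(\ref{thm:setup:fin-rep}) we may replace $A$ by a finite quotient, which is again $\mathfrak{m}$-local and has $H(A)$ unchanged, and by Theorem~\ref{thm:setup}(\ref{thm:setup:closure}) we may assume $A=A(H(A))$ is canonical. A finite $\mathfrak{m}$-local $A$ is annihilated by $\mathfrak{m}^k$ for some $k\geq 1$, so $\phi\colon\Lambda^2\twoheadrightarrow A$ factors through the finite local ring $R_k:=\Lambda/\mathfrak{m}^k$ (with residue field $\mathbb{F}_p$, $p$ odd), $\ker\phi\supseteq\mathfrak{m}^k\Lambda^2$, and $A\cong R_k^2/\bar L$ with $\bar L:=\ker\phi/\mathfrak{m}^k\Lambda^2$. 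Since Burau-equivalent epimorphisms give conjugate subgroups, the problem becomes: for each $k$, run over the finitely many $\mathrm{Bu}_3$-orbits of submodules $\bar L\subseteq R_k^2$ and decide which of the groups $H(\bar L)=\{h\in\mathrm{Bu}_3:(h-1)\Lambda^2\subseteq\ker\phi\}$ are genus-zero.

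The second stage rests on the description of $\mathrm{Bu}_3$ and of its image modulo $\mathfrak{m}^k$ inside $GL(2,R_k)$, which I would take from Section~\ref{sec:braid} (following Degtyarev). The key observation is that there is a ring surjection $R_k\twoheadrightarrow\mathbb{Z}/p^k$ sending $t\mapsto-1$ — well-defined because $\mathfrak{m}^k\subseteq\langle t+1,p^k\rangle$ — and it is compatible with the covering $\operatorname{c}$: the reduction of $\operatorname{c}$ modulo $p^k$ is the composite of $\mathrm{Bu}_3\to\mathrm{Bu}_3/\mathrm{Bu}_3[\mathfrak{m}^k]\hookrightarrow GL(2,R_k)$ with the maps $GL(2,R_k)\to GL(2,\mathbb{Z}/p^k)\to PSL(2,\mathbb{Z}/p^k)$, where $\mathrm{Bu}_3[\mathfrak{m}^k]=\{h:(h-1)\Lambda^2\subseteq\mathfrak{m}^k\Lambda^2\}\subseteq H(\bar L)$. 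Using the known image of $\mathrm{Bu}_3$ modulo $\mathfrak{m}^k$ one then identifies $\operatorname{c}(\mathrm{Bu}_3[\mathfrak{m}^k])$ explicitly (the principal congruence subgroup of level a power of $p$, up to a controlled discrepancy); consequently each $\operatorname{c}(H(\bar L))$ is a \emph{congruence} subgroup of $\Gamma$, namely the one corresponding under this compatibility to the stabilizer $\{g\in\mathrm{Bu}_3\bmod\mathfrak{m}^k:(g-1)R_k^2\subseteq\bar L\}$, and it is one of the finitely many subgroups between $\operatorname{c}(\mathrm{Bu}_3[\mathfrak{m}^k])$ and $\Gamma$. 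For the cyclic \enquote{line} modules (such as $A=\Lambda/\mathfrak{m}$) it comes out of $\bar\Gamma_1$- or $\bar\Gamma_0$-type, for $A=R_k^2/\mathfrak{m}\Lambda^2$ of $\bar\Gamma$-type, and so on. The hypothesis $p\neq2$ is used precisely in this identification of the Burau image.

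In the third stage, for each candidate $\operatorname{c}(H(\bar L))$ one applies the genus formula $g=1+\mu/12-\nu_2/4-\nu_3/3-\nu_\infty/2$, where $\mu$ is the index in $\Gamma$ and $\nu_2,\nu_3,\nu_\infty$ count elliptic points of orders $2$, $3$ and cusps, all computable from the mod-$p^k$ data. When $\bar L$ is \enquote{deep}, $\mu$ grows at least like a fixed positive power of the level while $\nu_2,\nu_3$ remain bounded and $\nu_\infty$ grows strictly more slowly than $\mu$, so $g\to\infty$; equivalently one may invoke that a genus-zero congruence subgroup of $\Gamma$ has bounded level. This leaves only a short list of admissible $(p,k)$ with $p\neq2$: levels $p^k$ below a small bound, so in effect a handful of small primes (the $\bar\Gamma_1$-type cases at $p\in\{3,5,7\}$, prime-square levels at $p=3$, and possibly a $\bar\Gamma_0$-type case). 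For each of these one lists the $\mathrm{Bu}_3$-orbits of submodules $\bar L\subseteq R_k^2$, computes $\operatorname{c}(H(\bar L))\leq\Gamma$ and its genus, and records the genus-zero cases; these are exactly the conjugacy classes of Table~\ref{tab:groups}. As a byproduct, the genus-zero $\bar\Gamma_1$-type entries reproduce the known fact that an irreducible trigonal curve admits a $D_{2p}$-cover only for $p\in\{3,5,7\}$.

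I expect the main obstacle to lie in the second stage and in the borderline cases of the fourth. Identifying the image of $\mathrm{Bu}_3$ modulo $\mathfrak{m}^k$ — so as to pin each $\operatorname{c}(H(\bar L))$ down to the exact congruence subgroup rather than merely sandwiching it — calls for a careful study of the reduced Burau representation over the ramified local ring $R_k$: the image is not the whole of $SL(2,R_k)$, and getting the discrepancy right is exactly what decides which levels survive, and is where $p\neq2$ enters (the case $p=2$ genuinely behaves differently). The other labour-intensive point is the finite case check itself: the number of $\mathrm{Bu}_3$-orbits of submodules of $R_k^2$ grows with $k$, deciding Burau-equivalence among them relies on Theorem~\ref{thm:setup}(\ref{thm:setup:closure}) together with the structure of $\mathrm{Bu}_3$, and for each orbit one must produce the triple $(\mu,\nu_2+\nu_3,\nu_\infty)$ and settle the genus dichotomy; the delicate instances are distinguishing, for a given $\bar L$, between the various intermediate subgroups (for example $\bar\Gamma_0(7)$ versus $\bar\Gamma_1(7)$, which also controls whether $p=13$ is admissible) and handling the prime-square-level modules at $p=3$.
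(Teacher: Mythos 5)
Your overall shape (reduce to finite $\mathfrak{m}$-local modules, turn \enquote{genus-zero} into a computable condition, bound the admissible cases, then enumerate) agrees with the paper's, but your Stage~2 contains a genuine gap that the rest of the argument leans on. You assert that each $\operatorname{c}(H(\bar L))$ is a \emph{congruence} subgroup of $\Gamma$, pinned down by the stabilizer of $\bar L$ after pushing forward along $R_k\twoheadrightarrow\mathbb{Z}/p^k$, and you then invoke the bounded level of genus-zero congruence subgroups in Stage~3. The condition $(h-1)R_k^2\subseteq\bar L$ is a condition over $R_k=\Lambda/\mathfrak{m}^k$, which is strictly larger than $\mathbb{Z}/p^k$ (it carries the nilpotent $\lambda=-1-t$), so it is genuinely finer than any congruence condition on $\operatorname{c}(h)$ modulo a power of $p$; there is no a priori reason for $\operatorname{c}(H(\bar L))$ to contain a principal congruence subgroup, and this is not \enquote{known from Section~\ref{sec:braid}} nor from the cited description of $\mathrm{Bu}_3$. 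Indeed, the paper points out (Section~\ref{sec:table}) that for other maximal ideals $\mathfrak{m}$ the groups $\operatorname{c}(H)$ can fail to be congruence subgroups, so congruence-ness cannot follow from soft structural facts about the Burau image; for $\mathfrak{m}=\langle p,t+1\rangle$ it is an \emph{a posteriori} observation about the entries of Table~\ref{tab:groups}, not an input. The paper avoids this issue entirely: it identifies $H(A)$ as a point stabilizer for the $\mathrm{Bu}_3$-action on generating pairs (Lemma~\ref{lem:stab}), descends to a finite $\Gamma$-set $\mathcal{C}(A)$ (or the weighted quotient $\mathcal{P}(R)$ for cyclic $A$), and computes the genus directly as a combinatorial Euler characteristic of the orbit (Lemmas~\ref{lem:genus} and~\ref{lem:chi_P}), with no reference to congruence levels.

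Even granting congruence-ness, your Stage~3 estimate (\enquote{$\nu_\infty$ grows strictly more slowly than $\mu$, so $g\to\infty$}) is too crude to close the argument. The decisive cases in the paper are borderline: for many infinite families of rings and wheels the sum of cusp widths comes out \emph{exactly} equal to $|\Omega|/6$, so that $\chi(\Omega)\leq 0$ with equality, and detecting this requires the precise region-size and weight formulas of Lemmas~\ref{lem:w-reg}--\ref{lem:r-reg} (the quantities $\omega_\ell$, $\ell_0(m)$, $\ell_0'(m)$) together with the counting of complete monovalent vertices (Lemma~\ref{lem:r-vtx}) and the delicate case splits of Section~\ref{sec:proof} (e.g.\ $\omega_2\neq 0$ versus $\omega_2=0,\ \omega_1\lambda^2\neq0$ at $p=3$). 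None of this is supplied or replaceable by an asymptotic bound, so as written the proposal reduces the theorem to two unproved claims --- the congruence property of $\operatorname{c}(H(\bar L))$ for this $\mathfrak{m}$, and the quantitative cusp-count estimates --- each of which carries essentially the full difficulty of the paper's proof.
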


The case $\mathfrak{m} = \langle p, t+1\rangle$ is likely to be more difficult than the case $t+1\not\in\mathfrak{m}$, which we intend to consider in a forthcoming paper. Moreover, this case is related to the dihedral covers of trigonal curves, i.e. cyclic covers of certain elliptic surfaces \citep[see][]{degtyarev}. The subgroups $H(A)$ which appear as part of the classification of the dihedral covers in loc. cit., except for $p=2$, are precisely those of depth~2 in Table~\ref{tab:groups}. We exclude the case $p=2$ since it would likely involve much more computation; on the other hand, $p=2$ does not appear for irreducible trigonal curves \citep{degtyarev}.

%a trigonal curve $C\subset\Sigma$ is reducible if and only if $\operatorname{c}(H_C)\subset \Gamma_1(2)$, and $\mathcal{H}(\Omega) = \operatorname{c}^{-1}(\Gamma_1(2))$ for the unique orbit $\Omega$ in $\mathcal{C}(\Lambda\slash \langle 2, t+1\rangle)$
%The main finding of Degtyarev on the Alexander invariants is that he found $H(\Bbbk)$ and $H(\Bbbk^2)$.

\subsection{The Contents of the Paper}

In Section~\ref{sec:groups}, we cite a few properties of the groups $\Gamma$, $\mathbb{B}_3$, and $\mathrm{Bu}_3$. In Section~\ref{sec:reduction}, we first give an alternative description of $H(A)$. Then, we prove Theorem~\ref{thm:setup} and establish restrictions on $H(A)$ for local $A$. Finally in Section~\ref{sec:proof}, we prove the main Theorem~\ref{thm:main}. Also, Table~\ref{tab:groups} and its explanation are given in Section~\ref{sec:table}, after necessary terminology has been introduced.

\paragraph{Acknowledgement.}
I thank Prof. Degtyarev under whose supervision I completed this work. He introduced me to the subject and constantly encouraged me during the preparation of this paper.

\section{Preliminaries}
\label{sec:groups}

This section contains necessary preliminary information on the modular group $\Gamma$, the braid group $\mathbb{B}_3$, and the Burau representation $\mathbb{B}_3\rightarrow \mathrm{Bu}_3\subset GL(2, \Lambda)$. The content of this section is completely standard; one can consult the classical sources \citep{serre-arithmetic, artin, burau}.

\subsection{The Modular Group $\Gamma$}
\label{sec:modular}

The modular group is often considered together with its left action on the complex upper half plane $\mathbb{H}$ via the inclusion $\Gamma \subset PSL(2, \mathbb{R}) = \Aut(\mathbb{H})$. Explicitly, the action of a matrix $\begin{pmatrix} a & b\\ c & d \end{pmatrix}$ is $z \mapsto \frac{az+b}{cz+d}$. This $\Gamma$-action is discrete and almost free: there are only two orbits for which the stabilizer is nontrivial, but the stabilizer is finite for these two orbits as well. Namely, the stabilizer of $\omega:= \frac{1+\sqrt{3}i}{2}$ is generated by $X := \begin{pmatrix} 0 & 1\\ -1 & 1 \end{pmatrix}$, i.e.~$z\mapsto \frac{1}{1-z}$; and the stabilizer of $i$ is  generated by $Y :=\begin{pmatrix} 0 & -1\\ 1 & 0 \end{pmatrix}$, i.e.~$z\mapsto -\frac{1}{z}$. A very classic theorem states
\begin{equation}
\label{eq:gamma-gen-rel}
\Gamma = \langle X, Y \mid X^3 = Y^2 = 1 \rangle.
\end{equation}
Hence, the abelianization of $\Gamma$ is isomorphic to $\mathbb{Z}_6$; we fix the abelianization $\operatorname{ab}\colon \Gamma\twoheadrightarrow \mathbb{Z}_6$ such that $\operatorname{ab}(X) = 2$ (note that one necessarily has $\operatorname{ab}(Y) = 3$).

\subsubsection{The Modular Curves}

Since the $\Gamma$-action on $\mathbb{H}$ is discrete and almost free, for any subgroup $K\subset\Gamma$, the quotient space $K\backslash \mathbb{H}$ naturally admits a Riemann surface structure (it also admits an orbifold structure, but we do not use this language explicitly). In particular, $\Gamma \backslash \mathbb{H} \cong \mathbb{C}$. We adopt Kodaira's normalization which fixes an identification $\Gamma \backslash \mathbb{H} = \mathbb{C}$ by mapping the orbits of $\omega\in\mathbb{H}$ and $i\in \mathbb{H}$ to $0\in\mathbb{C}$ and $1\in\mathbb{C}$, respectively.

%The holomorphic function $\mathbb{H}\rightarrow\mathbb{C}^1 = K\backslash \mathbb{H}$ is called \emph{the modular $j$-invariant}. The value of \emph{the modular $j$-invariant} at point $z$ is equal to the $j$-invariant of the complex torus formed by identifying the opposite sides of the parallelogram whose vertices are at the points $0,1,z,z+1$.

Let $K\subset\Gamma$ be a finite-index subgroup. The \emph{modular curve} $X_K$ is a standard compactification of the Riemann surface $K\backslash\mathbb{H}$. In particular, $X_\Gamma = \mathbb{P}^1 = \mathbb{C}\cup\{\infty\}$. Any inclusion $K_1\subset K_2$ of subgroups clearly induces a non-constant (holomorphic) map $X_{K_1} \rightarrow X_{K_2}$ between the corresponding modular curves. For any $K$, the map $X_K\rightarrow X_\Gamma = \mathbb{P}^1$ is unramified outside the special points $\{0,1,\infty\}$. The conjugacy class of $K$ determines the map $X_K\rightarrow\mathbb{P}^1$ up to isomorphism. Conversely, the map $X_K\rightarrow\mathbb{P}^1$ determines $K$ up to conjugacy. The \emph{cusps} of $K$ are the points in $X_K$ which map to $\infty\in\mathbb{P}^1$. The \emph{width} of a cusp is the ramification index. The \emph{genus} of $K$, denoted by $g(K)$, is defined as that of $X_K$.

There is an immediate generalization of the construction above, which we find very useful. Let $E$ be a finite right $\Gamma$-set. The \emph{modular curve} $X_E$ is the disjoint union of the curves $X_K$ as $K$ varies over the stabilizers of distinct orbits in $E$. The $\Gamma$-set $E$ and the map $X_E\rightarrow\mathbb{P}^1$ determine each other up to isomorphism. The \emph{cusps} of $E$ are similarly defined. The notion of \emph{genus} $g(E)$ applies when $E$ is transitive, while that of \emph{Euler characteristic} $\chi(E)$ is meaningful in general. We denote the singleton $\Gamma$-set by $\{*\}$, as such $X_{\{*\}} = X_\Gamma = \mathbb{P}^1$. For any $\Gamma$-equivariant surjection $E_1\rightarrow E_2$, there is a covering $X_{E_1}\rightarrow X_{E_2}$ (possibly ramified). In the subsequent sections, we frequently deal with surjections of $\Gamma$-sets. Whenever we speak of a covering, it is possibly ramified.
%Note that any $\Gamma$-set morphism with transitive target is a surjection and any non-constant holomorphic map with connected target is a covering (possibly ramified).

\begin{remark}
\label{rem:genus}
Let $E_1, E_2$ be finite transitive $\Gamma$-sets with a $\Gamma$-equivariant surjection $E_1\rightarrow E_2$. If $g(E_1) = 0$, then $g(E_2) = 0$ as well. This is clear because there is a covering $X_{E_1}\rightarrow X_{E_2}$. Consequently, let $K_1\subset K_2\subset\Gamma$ be finite-index subgroups. If $g(K_1) = 0$, then $g(K_2) = 0$ as well.
\end{remark}

%\begin{remark}
%Note that we are almost always interested in only the topology of the modular curves and not in their analytic structure.
%\end{remark}

\subsubsection{The Standard CW-structures on the Modular Curves}

The \emph{terminal bipartite graph} $\mathrel{\bullet}\joinrel\relbar\joinrel\relbar\joinrel\relbar\joinrel\mathrel{\circ}$  is canonically embedded in $\mathbb{C}\subset \mathbb{P}^1 = X_{\{*\}}$ such that the black vertex goes to $0$, the white vertex goes to 1, and the edge goes to the real interval $[0,1]$. For any finite right $\Gamma$-set $E$, we denote the preimage of this graph under the map $X_E\rightarrow X_{\{*\}}$ by $\mathcal{S}_E$. In particular, we denote the terminal bipartite graph itself by $\mathcal{S}_{\{*\}}$. The notation $\mathcal{S}_K$ is similarly defined for finite-index subgroups $K\subset\Gamma$. Note that the restricted map $X_E\setminus \mathcal{S}_E \rightarrow X_{\{*\}}\setminus \mathcal{S}_{\{*\}}$ is unramified outside $\infty$, since $0, 1\in \mathcal{S}_{\{*\}}$. Thus, each component of $X_E\setminus \mathcal{S}_E$ is a 2-cell. Hence, $\mathcal{S}_E$ provides a CW decomposition of $X_E$. Clearly, each of the 2-cells contains exactly one cusp. Note that $\mathcal{S}_E$ is a ribbon graph in a natural way, since it is embedded in an oriented surface. By convention, we agree that the cyclic ordering of the edges is in the counter-clockwise direction. In fact, the ribbon graph $\mathcal{S}_E$ coincides with Grothendieck's \emph{dessins d'enfant} corresponding to the ramified covering $X_E\rightarrow\mathbb{P}^1$ \citep[see][]{grothendieck}. The preimage $\mathcal{F}$ of $\mathcal{S}_{\{*\}}$ under the map $\mathbb{H}\rightarrow \Gamma \backslash\mathbb{H} = \mathbb{C}$ is a tree \citep[e.g.][]{serre-trees}. Clearly, $\mathcal{F}$ has a black vertex at $\omega$ and a white vertex at $i$. Moreover, $\omega$ and $i$ are joined by an edge $e$. The $\Gamma$-action on $\mathbb{H}$ restricts to an action on $\mathcal{F}$. It is interesting that this action immediately shows~(\ref{eq:gamma-gen-rel}), by the Serre theory \citep[see][]{serre-trees}.%serre, p.~35

\medskip

\emph{The set of edges of $\mathcal{S}_E$ is a right $\Gamma$-set in a natural way, moreover it is isomorphic to $E$.} Consider the two loops $x, y$ in $\mathbb{P}^1\setminus\{0,1,\infty\}$ based at $\frac{1}{2}$, formed as counterclockwise circles of radius $\frac{1}{2}$ centered at 0 and 1, respectively. Then, the lifts of the path $x$ under the covering map $X_E\rightarrow \mathbb{P}^1$ define the action of $X$ on the set of edges of $\mathcal{S}_E$, while the lifts of the path $y$ define the action of $Y$. More explicitly, $X$ takes each edge to the next one among the edges sharing the same black vertex and $Y$ takes each edge to the next one among the edges sharing the same white vertex. Here, \enquote{next} refers to the cyclic order coming from the ribbon graph structure. These actions of $X$ and $Y$ uniquely extend to a right $\Gamma$-action. Thus, the action of $YX$ is described by the lifts of a certain loop formed by joining a clockwise circle around $\infty$ to $\frac{1}{2}$ along a path lying in the lower half plane, since $yx$ is homotopic to such a loop. Hence, the cusps are in bijection with $YX$-orbits.

The right $\Gamma$-action just described applies to $\mathcal{F}$ as well and it can be equivalently characterized as follows: The left $\Gamma$-action on the set of edges of $\mathcal{F}$ is free and transitive; hence, by identifying the edge $e$ with $1\in\Gamma$, one identifies this set with $\Gamma$. The right $\Gamma$-action comes from this identification. We now show the isomorphism between the set of edges of $\mathcal{S}_E$ and $E$. Clearly, one can assume that $E$ is transitive. Let $K$ be the stabilizer of any element of $E$ (well-defined up to conjugacy); then, one has $\mathcal{S}_E \cong \mathcal{S}_K$. On the other hand, $\mathcal{S}_K = K\backslash\mathcal{F}$; thus, the set of edges of $\mathcal{S}_K$ is identified with $K\backslash\Gamma$, which is isomorphic to $E$ as right $\Gamma$-sets.

\medskip

In light of the above, we introduce the following terminology for a right $\Gamma$-set $E$. The \emph{black vertices} in $E$ are the $X$-orbits, the \emph{white vertices} are the $Y$-orbits, the \emph{edges} are simply the elements of $E$, and the \emph{regions} are the $YX$-orbits. Then, the black and white vertices and the edges in $E$ are in bijection with those of $\mathcal{S}_E$, while the regions in $E$ are in bijection with the cusps of $E$, or equivalently, the components (2-cells) of $X_E\setminus \mathcal{S}_E$. Continuing to imitate the graph theory language, we say that a vertex in $E$ is \emph{monovalent} if it consists of a single element. Furthermore, we often speak of an equivariant surjection $E_1\rightarrow E_2$ of $\Gamma$-sets as a \emph{covering}. A covering takes vertices to vertices, regions to regions, etc. For vertices and regions, we speak of \emph{ramification}, whose meaning must be clear. For example, a vertex which is not monovalent is necessarily unramified. Similarly, the meaning of the \emph{degree} of a covering, or the notion of a \emph{regular} (\emph{Galois}) covering must be clear as well. We now give a formula for the Euler characteristic of a $\Gamma$-set.
\begin{lemma}[Euler Characteristic Formula]
\label{lem:genus}
Let $E$ be a finite right $\Gamma$-set. For any $\gamma\in \Gamma$, let $|E_\gamma|$ denote the number of $\gamma$-orbits in $E$ and $|E^\gamma|$ denote the number of $\gamma$-fixed elements in $E$.
Then
\begin{align*}
\chi(E) & = |E_X| + |E_Y| - |E| + |E_{YX}| \\
& = -  \frac{|E|}{6} + |E_{YX}| + \frac{2}{3}\cdot|E^X| +  \frac{1}{2}\cdot|E^Y|
\end{align*}
\end{lemma}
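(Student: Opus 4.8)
The plan is to prove the Euler characteristic formula by interpreting $\chi(E)$ through the CW-structure $\mathcal{S}_E$ on $X_E$, and then re-packaging the vertex/edge/region counts in two different ways. Recall from the preceding discussion that $\mathcal{S}_E$ is a CW-decomposition of $X_E$ whose $0$-cells are the black vertices (i.e.\ the $X$-orbits of $E$) together with the white vertices (the $Y$-orbits), whose $1$-cells are the edges (the elements of $E$ itself), and whose $2$-cells are the regions (the $YX$-orbits, in bijection with the cusps). Hence
\begin{equation*}
\chi(E) = \#\{0\text{-cells}\} - \#\{1\text{-cells}\} + \#\{2\text{-cells}\} = |E_X| + |E_Y| - |E| + |E_{YX}|,
\end{equation*}
which is exactly the first line of the asserted formula. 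So the first line is essentially a restatement of what has already been set up; the only thing to check carefully is that $\chi$ of a (possibly disconnected) $\Gamma$-set, as defined in the text as the Euler characteristic of $X_E$, is additive over orbits and therefore matches this cellular count component by component — this is routine.

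For the second line, I would massage the three "orbit-count" terms $|E_X|, |E_Y|, |E_{YX}|$ using the orbit-counting relation that holds for the action of a single cyclic group. Since $X$ has order $3$, every $X$-orbit has size $1$ or $3$; writing $|E^X|$ for the number of fixed points we get $|E_X| = |E^X| + \tfrac{1}{3}(|E| - |E^X|) = \tfrac{1}{3}|E| + \tfrac{2}{3}|E^X|$. Likewise $Y$ has order $2$, so $|E_Y| = \tfrac{1}{2}|E| + \tfrac{1}{2}|E^Y|$. Substituting these into the first line,
\begin{equation*}
\chi(E) = \tfrac{1}{3}|E| + \tfrac{2}{3}|E^X| + \tfrac{1}{2}|E| + \tfrac{1}{2}|E^Y| - |E| + |E_{YX}| = -\tfrac{1}{6}|E| + |E_{YX}| + \tfrac{2}{3}|E^X| + \tfrac{1}{2}|E^Y|,
\end{equation*}
which is the second line. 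Note that no analogous simplification is applied to the $YX$-term: $YX$ has infinite order in $\Gamma$, so $|E_{YX}|$ is left as is (it is the number of cusps).

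I do not expect a genuine obstacle here; the lemma is bookkeeping built directly on the CW-structure already described. The one point that deserves a sentence of justification is the identification $\chi(X_E) = |E_X| + |E_Y| - |E| + |E_{YX}|$ when $X$- and $Y$-orbits can overlap as subsets of $E$ but give genuinely distinct $0$-cells of $\mathcal{S}_E$ (a black vertex and a white vertex are never identified, since $0$ and $1$ are distinct points of $X_{\{*\}}$ and $\mathcal{S}_E$ is their preimage) — so the count of $0$-cells really is $|E_X| + |E_Y|$ with no double-counting. After that caveat, the two displayed computations above constitute the whole proof.
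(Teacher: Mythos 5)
Your proof is correct and follows essentially the same route as the paper's: the first line is read off from the cell counts of the CW-decomposition $\mathcal{S}_E$ of $X_E$, and the second line follows by substituting $|E_X| = \tfrac{1}{3}|E| + \tfrac{2}{3}|E^X|$ and $|E_Y| = \tfrac{1}{2}|E| + \tfrac{1}{2}|E^Y|$, which hold because $X^3 = Y^2 = 1$ forces every $X$-orbit to have size $1$ or $3$ and every $Y$-orbit to have size $1$ or $2$. The extra remarks on additivity over components and on black and white vertices never being identified are fine but not needed beyond what the paper already sets up.
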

\begin{proof}
By definition, $\chi(E) = \chi(X_E)$. In the canonical CW-decomposition of $X_E$, the number of 0-cells (the black and white vertices) is $|E_X| + |E_Y|$, the number of 1-cells (the edges) is $|E|$ and the number of 2-cells is $|E_{YX}|$; this establishes the formula in the top line. For the bottom line, it is sufficient to observe that $|E_X| = \frac{|E|}{3} + \frac{2}{3}\cdot|E^X|$ and $|E_Y| = \frac{|E|}{2} + \frac{1}{2}\cdot|E^Y|$. This is because each $X$-orbit contains 1 or 3 elements and each $Y$-orbit contains 1 or 2 elements (since $X^3=Y^2=1$).
\end{proof}

\subsection{The Braid Groups and the Burau Representation \citep{artin, burau}}
\label{sec:braid}

Consider the free group $F_n$ as equipped with a fixed $n$-tuple $(s_1, s_2, \ldots, s_n)$ of generators. The \emph{braid group} $\mathbb{B}_n$ consists of those elements in $\Aut(F_n)$ which take each $s_i$ to a conjugate of some $s_j$ and which fix the product $s_1s_2\cdots s_n\in F_n$. Then, $\mathbb{B}_n$ is generated by $\sigma_1, \sigma_2, \ldots, \sigma_{n-1}$ which are defined by
\begin{equation*}
\sigma_i \colon s_i\mapsto s_is_{i+1}s_i^{-1}, \qquad s_{i+1}\mapsto s_i, \qquad s_j\mapsto s_j\ \text{for}\ j\neq i, i+1.
\end{equation*}
The action of $\mathbb{B}_n\cdot\operatorname{Inn}(F_n)\subset\Aut(F_n)$ on $F_n$ respects the epimorphism $\operatorname{u}\colon F_n\twoheadrightarrow\mathbb{Z}$ defined by $\operatorname{u}(s_i) = 1$. The \emph{Burau representation} $\mathbb{B}_n\cdot\operatorname{Inn}(F_n)\rightarrow GL(n-1, \Lambda)$ is the induced action on $A_n := \operatorname{Ker}(\operatorname{u})\slash\operatorname{Ker}(\operatorname{u})' \cong \Lambda^{n-1}$. Here, we identify $A_n$ with $\Lambda^{n-1}$ by matching the special basis $(s_1s_2^{-1}, s_2s_3^{-1}, \ldots, s_{n-1}s_n^{-1})$ of the former with the standard basis of the latter.

In the braid group $\mathbb{B}_3$, let $\mathbb{X} := \sigma_1\sigma_2$ and let $\mathbb{Y} := \sigma_1\sigma_2\sigma_1$. Then, one has
\begin{equation}
\label{eq:b3-gen}
\mathbb{B}_3 = \langle \mathbb{X}, \mathbb{Y}\mid \mathbb{X}^3 = \mathbb{Y}^2\rangle.
\end{equation}
 Explicitly written out,
\begin{equation*}
\begin{aligned}
\mathbb{X} \colon & s_1\mapsto s_1s_2s_1^{-1},
& \qquad &  s_2\mapsto s_1s_3s_1^{-1},
& \qquad &  s_3\mapsto s_1, \\
\mathbb{Y} \colon & s_1\mapsto s_1s_2s_3s_2^{-1}s_1^{-1},
& & s_2\mapsto s_1s_2s_1^{-1},
& & s_3\mapsto s_1,
\end{aligned}
\end{equation*}
and 
$\mathbb{X}^3 = \mathbb{Y}^2\colon s_i\mapsto (s_1s_2s_3)\cdot s_i\cdot (s_1s_2s_3)^{-1}$. Moreover,
\begin{equation}
\label{eq:faithful}
\text{the Burau representation}\ \mathbb{B}_3 \rightarrow \mathrm{Bu}_3\subset GL(2,\Lambda)\ \text{is faithful}.
\end{equation}
Hence, we identify $\mathbb{B}_3$ with its image and write
\begin{equation*}
\mathbb{X} = \begin{pmatrix} 0&- t\\ t & -t\end{pmatrix}, \qquad \mathbb{Y} = \begin{pmatrix} 0& -t\\ -t^2 & 0\end{pmatrix}.
\end{equation*}
Thus, $\mathbb{X}^3 = \mathbb{Y}^2 = t^3\cdot 1$. Clearly, $\mathrm{Bu}_3$ is generated by $\mathbb{X}$, $\mathbb{Y}$ and $t\cdot 1$. Then, one has $|\mathrm{Bu}_3:\mathbb{B}_3| = 3$ since
\begin{equation}
\label{eq:bu3>b3}
t\cdot 1\not\in\mathbb{B}_3.
\end{equation}
%\begin{equation*}
%\begin{aligned}
%\mathbb{X}(s_3s_2^{-1}) & = s_1\cdot (s_2s_1^{-1})^{-1}\cdot (s_3s_2^{-1})^{-1}\cdot s_1^{-1}\\
%\mathbb{X}(s_2s_1^{-1}) & = s_1\cdot (s_3s_2^{-1})\cdot s_1^{-1}\\
%\mathbb{Y}(s_3s_2^{-1}) & = s_1\cdot (s_2s_1^{-1})^{-1}\cdot s_1^{-1}\\
%\mathbb{Y}(s_2s_1^{-1}) & = (s_1s_2)\cdot (s_3s_2^{-1})^{-1} \cdot (s_1s_2)^{-1}
%\end{aligned}
%\end{equation*}

There is a canonical homomorphism $\operatorname{c}\times \operatorname{dg}\colon \mathrm{Bu}_3 \rightarrow \Gamma \times \mathbb{Z}$: the first component $\operatorname{c}$ is defined by the evaluation of a matrix at $t=-1$ followed by projectivization, and the second component is defined as $\operatorname{dg}(b) = \operatorname{deg}(\operatorname{det}(b))$ for any matrix $b$. Then,
\begin{equation*}
\begin{aligned}
& \operatorname{c}(\mathbb{X}) = X, \qquad && \operatorname{c}(\mathbb{Y}) = Y, \qquad && \operatorname{c}(t\cdot 1) = 1,\\
& \operatorname{dg}(\mathbb{X}) = 2, \qquad && \operatorname{dg}(\mathbb{Y}) = 3, \qquad && \operatorname{dg}(t\cdot 1) = 2.
\end{aligned}
\end{equation*}
It is easy to see that the image of $\mathbb{B}_3$ under $\operatorname{c}\times \operatorname{dg}$ consists of those pairs $(\gamma, n)$ for which $\operatorname{ab}(\gamma) \equiv n \pmod{6}$, which proves~(\ref{eq:bu3>b3}). It is also easy to see that the only relation between $\tilde{X} := (X, 2)$ and $\tilde{Y} := (Y, 3)$ in $\Gamma\times\mathbb{Z}$ is that $\tilde{X}^3 = \tilde{Y}^2$, because the only relation between $X$ and $Y$ in $\Gamma$ is $X^3=Y^2=1$. This observation shows that the only relation between $\mathbb{X}$ and $\mathbb{Y}$ in $\mathbb{B}_3$ is $\mathbb{X}^3 = \mathbb{Y}^2$ (see~(\ref{eq:b3-gen})), it shows~(\ref{eq:faithful}), and it shows that $\operatorname{c}\times \operatorname{dg}$ is injective, all at once.

We define the \emph{depth} $\operatorname{d}(H)$ of a finite-index subgroup $H\subset\mathrm{Bu}_3$ as the least integer such that $t^{\operatorname{d}(H)}\cdot 1\in H$. Equivalently, $\operatorname{d}(H) = |\operatorname{Ker}(\operatorname{c}):\operatorname{Ker}(\operatorname{c})\cap H|$.

\section{Reduction to Local Modules}
\label{sec:reduction}

Let $A$ be a $\Lambda$-module. Consider the following right $\mathrm{Bu}_3$-action on the set $A^2$ of pairs of elements:
\begin{equation}
\label{eq:action}
(a_1, a_2)\cdot\begin{pmatrix} x & y \\ z & w \end{pmatrix} = (x\cdot a_1 + z\cdot a_2, y\cdot a_1 + w\cdot a_2), \qquad \begin{pmatrix} x & y \\ z & w \end{pmatrix}\in\mathrm{Bu}_3.
\end{equation}
This $\mathrm{Bu}_3$-action restricts to the subset $\mathcal{E}(A) := \{(a_1, a_2)\in A^2 \mid \Lambda\cdot a_1 + \Lambda\cdot a_2 = A\}$ of generating pairs. This allows us to give an alternative description of $H(\Lambda^2\twoheadrightarrow A)$.
\begin{lemma}
\label{lem:stab}
Let $\phi\colon\Lambda^2\twoheadrightarrow A$ be an epimorphism, and let $e_1 := \phi\left({\footnotesize\begin{bmatrix} 1\\ 0 \end{bmatrix}}\right)$ and $e_2 := \phi\left({\footnotesize\begin{bmatrix} 0\\ 1 \end{bmatrix}}\right)$. Then, $H(\phi)\subset\mathrm{Bu}_3$ is the stabilizer of $(e_1, e_2)\in\mathcal{E}(A)$.
\end{lemma}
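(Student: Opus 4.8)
The plan is to unwind both sides of the claimed equality directly from the definitions and the explicit $\mathrm{Bu}_3$-action in~(\ref{eq:action}). First I would record what the two objects are. By definition, $H(\phi) = \{h\in\mathrm{Bu}_3\mid (h-1)\cdot\Lambda^2\subset\operatorname{Ker}(\phi)\}$, where $h$ acts on column vectors $\Lambda^2$ by left multiplication. Meanwhile, the stabilizer in question is $\operatorname{Stab}(e_1,e_2) = \{h\in\mathrm{Bu}_3\mid (e_1,e_2)\cdot h = (e_1,e_2)\}$ for the right action~(\ref{eq:action}) on $A^2$; note $(e_1,e_2)\in\mathcal{E}(A)$ precisely because $\phi$ is an epimorphism, so $e_1,e_2$ generate $A$. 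The heart of the argument is a translation lemma: for $h = {\footnotesize\begin{pmatrix} x&y\\ z&w\end{pmatrix}}\in\mathrm{Bu}_3$ and any $v\in\Lambda^2$, one has $\phi(h\cdot v) = \phi(v)\cdot h$, where the left side uses left multiplication on $\Lambda^2$ and the right side uses the action~(\ref{eq:action}) on $A^2$. This is a routine check: evaluate both sides on $v = {\footnotesize\begin{bmatrix}1\\0\end{bmatrix}}$ and $v = {\footnotesize\begin{bmatrix}0\\1\end{bmatrix}}$, using $\Lambda$-linearity of $\phi$ and the definitions of $e_1,e_2$, and observe both sides are $\Lambda$-linear in $v$ (for the right-hand side this is the $\Lambda$-linearity of the coordinatewise action).

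Granting this, the equivalence is immediate. On one hand, $h\in H(\phi)$ means $(h-1)\cdot v\in\operatorname{Ker}(\phi)$ for all $v\in\Lambda^2$, i.e. $\phi(h\cdot v) = \phi(v)$ for all $v$. On the other hand, $h\in\operatorname{Stab}(e_1,e_2)$ means $(e_1,e_2)\cdot h = (e_1,e_2)$, i.e. $\phi({\footnotesize\begin{bmatrix}1\\0\end{bmatrix}})\cdot h = \phi({\footnotesize\begin{bmatrix}1\\0\end{bmatrix}})$ and similarly for ${\footnotesize\begin{bmatrix}0\\1\end{bmatrix}}$. By the translation lemma the latter two equations say exactly $\phi(h\cdot{\footnotesize\begin{bmatrix}1\\0\end{bmatrix}}) = \phi({\footnotesize\begin{bmatrix}1\\0\end{bmatrix}})$ and $\phi(h\cdot{\footnotesize\begin{bmatrix}0\\1\end{bmatrix}}) = \phi({\footnotesize\begin{bmatrix}0\\1\end{bmatrix}})$. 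Since $\phi$ and the map $v\mapsto h\cdot v$ are both $\Lambda$-linear, $\phi(h\cdot v) = \phi(v)$ holding on a $\Lambda$-generating set of $\Lambda^2$ forces it for all $v\in\Lambda^2$. Hence the two conditions on $h$ coincide, giving $H(\phi) = \operatorname{Stab}(e_1,e_2)$.

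I do not anticipate a genuine obstacle here; the only thing to be careful about is the bookkeeping of left versus right actions — the Burau representation is written with matrices acting on the left of column vectors in $\Lambda^2$, whereas $\mathcal{E}(A)$ carries a right action, and the formula~(\ref{eq:action}) is precisely the transpose convention that makes $\phi$ intertwine the two. Concretely, if $h$ sends ${\footnotesize\begin{bmatrix}1\\0\end{bmatrix}}\mapsto{\footnotesize\begin{bmatrix}x\\z\end{bmatrix}}$ and ${\footnotesize\begin{bmatrix}0\\1\end{bmatrix}}\mapsto{\footnotesize\begin{bmatrix}y\\w\end{bmatrix}}$ under left multiplication, then applying $\phi$ gives $x\cdot e_1 + z\cdot e_2$ and $y\cdot e_1 + w\cdot e_2$, which is exactly the first, resp. second, coordinate of $(e_1,e_2)\cdot h$ in~(\ref{eq:action}). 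Verifying this compatibility on the two basis vectors, and then invoking $\Lambda$-linearity to extend, is the whole content of the proof.
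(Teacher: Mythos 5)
Your proof is correct and follows essentially the same route as the paper: both reduce the stabilizer condition to the two equations $x\cdot e_1+z\cdot e_2=e_1$ and $y\cdot e_1+w\cdot e_2=e_2$ obtained by applying $\phi$ to the images of the standard basis vectors, and use $\Lambda$-linearity to pass between the basis and all of $\Lambda^2$. The only cosmetic issue is that your displayed identity $\phi(h\cdot v)=\phi(v)\cdot h$ does not literally typecheck ($\phi(v)$ lies in $A$, not $A^2$), but your final paragraph states the intended coordinatewise compatibility precisely and correctly.
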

\begin{proof}
Let $b := \begin{pmatrix} x & y \\ z & w \end{pmatrix}\in H(\phi)$. By definition, ${\footnotesize\begin{bmatrix} x -1 \\ z \end{bmatrix}} = (b-1)\cdot{\footnotesize\begin{bmatrix} 1\\ 0 \end{bmatrix}} \in \operatorname{Ker}(\phi)$ and ${\footnotesize \begin{bmatrix} y \\ w-1 \end{bmatrix}} = (b-1)\cdot{\footnotesize\begin{bmatrix} 0\\ 1 \end{bmatrix}} \in \operatorname{Ker}(\phi)$. Thus, $(x-1)\cdot e_1 + z\cdot e_2 = \phi\left({\footnotesize\begin{bmatrix} x -1 \\ z \end{bmatrix}}\right) = 0$ and $y\cdot e_1 + (w-1)\cdot e_2 = \phi\left({\footnotesize\begin{bmatrix} y \\ w-1 \end{bmatrix}}\right) = 0$. Hence, $(e_1, e_2)\cdot b = (e_1, e_2)$. This proves that $H(\phi)$ is contained in the stabilizer. The reverse inclusion can be proved with exactly the same calculations, in the backward direction.
\end{proof}

We now prove item~\ref{thm:setup:fin-rep} of Theorem~\ref{thm:setup}. In fact, items~\ref{thm:setup:closure} and~\ref{thm:setup:intersect} are obvious, and item~\ref{thm:setup:decompose} is a general fact of algebra; thus we do not prove them.
\begin{proof}[Proof of Theorem~\ref{thm:setup}, item~\ref{thm:setup:fin-rep}]
Let $d:=\operatorname{d}(H(A))$. Since $t^d\cdot 1\in H(A)$, we have ${\footnotesize\begin{bmatrix} t^d-1\\ 0 \end{bmatrix}} \in \operatorname{Ker}(\Lambda^2\twoheadrightarrow A)$ and ${\footnotesize\begin{bmatrix} 0\\ t^d-1 \end{bmatrix}} \in \operatorname{Ker}(\Lambda^2\twoheadrightarrow A)$. Therefore, $A$ is a quotient of $\big(\Lambda\slash(t^d-1)\big)^2$. In particular, $A$ is finitely generated over $\mathbb{Z}$. Hence, for any finite set $S\subset A$ of nonzero elements, there is a positive integer $n$ such that $S\cap nA = \emptyset$. Note that the quotient module $A' := A\slash nA$ is finite.

Let $(e_1,e_2)\in\mathcal{E}(A)$ be as in Lemma~\ref{lem:stab}, let $\mathrm{O}$ denote the $\mathrm{Bu}_3$-orbit of $(e_1, e_2)$, and let $S$ be the set of all nonzero $s\in A$ such that $\mathrm{O}$ contains a pair of the form $(e_1+s, a)$ or $(a, e_2+s)$. Since $H(A)$ is finite-index, $\mathrm{O}$ is finite, hence $S$ is finite. Let $A'$ be a finite quotient of $A$ such that $S\cap \operatorname{Ker}(A\twoheadrightarrow A') = \emptyset$ and let $\mathrm{O}'\subset\mathcal{E}(A')$ be the orbit such that $\mathrm{O}\mapsto \mathrm{O}'$. Clearly $\mathrm{O}\cong \mathrm{O}'$, hence $H(A) = H(A')$.
\end{proof}

Let $A$ be a local module. We denote the set of $t$-orbits in $\mathcal{E}(A)$ by $\mathcal{C}(A)$, i.e. $(a_1, a_2)\sim(t^k\cdot a_1, t^k\cdot a_2)$. We denote the quotient map by $\operatorname{c}\colon \mathcal{E}(A)\rightarrow\mathcal{C}(A)$. Since $t\cdot 1\in\mathrm{Bu}_3$ generates the kernel of $\operatorname{c}\colon \mathrm{Bu}_3 \rightarrow \Gamma$ (see Section~\ref{sec:braid}), the $\mathrm{Bu}_3$-action in~(\ref{eq:action}) reduces to a $\Gamma$-action on $\mathcal{C}(A)$. This action is explicitly described as follows:
\begin{equation}
\label{eq:gamma-act}
\begin{aligned}
& \operatorname{c}(a_1,a_2)\cdot X && = && \operatorname{c}((a_1,a_2)\cdot(t^{-1}\mathbb{X})) && = && \operatorname{c}(a_2,-a_1-a_2),\\
& \operatorname{c}(a_1,a_2)\cdot Y && = && \operatorname{c}((a_1,a_2)\cdot(t^{-1}\mathbb{Y})) && = && \operatorname{c}(-ta_2,-a_1),\\
& \operatorname{c}(a_1,a_2)\cdot YX && = && \operatorname{c}(-ta_2,-a_1)\cdot X && = && \operatorname{c}(-a_1,ta_2+a_1).
\end{aligned}
\end{equation}
Note that an epimorphism $A_1\twoheadrightarrow A_2$ induces a covering $\mathcal{C}(A_1)\rightarrow\mathcal{C}(A_2)$. 

Let $\Omega\subset\mathcal{C}(A)$ be a $\Gamma$-orbit, then $\operatorname{c}^{-1}(\Omega)\subset\mathcal{E}(A)$ is a $\mathrm{Bu}_3$-orbit. We denote by $\mathcal{H}(\Omega)\subset\mathrm{Bu}_3$ the stabilizer of an arbitrary pair in $\operatorname{c}^{-1}(\Omega)$, thus it is well-defined up to conjugacy. Clearly, $\mathcal{H}(\Omega)$ is genus-zero if and only if $\Omega$ is genus-zero. Now, suppose that $A$ is equipped with an epimorphism $\Lambda^2\twoheadrightarrow A$, equivalently a distinguished pair $(e_1, e_2)\in\mathcal{E}(A)$. Let $\Omega_0$ be the orbit of $\operatorname{c}(e_1, e_2)\in\mathcal{C}(A)$. By Lemma~\ref{lem:stab}, $H(A)$ is equal to $\mathcal{H}(\Omega_0)$ up to conjugacy. Hence, for a proof of Theorem~\ref{thm:main}, it is enough to check $\mathfrak{m}$-local modules $A$ and orbits $\Omega\subset\mathcal{C}(A)$.

For a genus-zero orbit $\Omega\subset\mathcal{C}(A)$, the subgroup $\mathcal{H}(\Omega)$ is completely determined (up to conjugacy) by the so-called \emph{type specification} on $\Omega$ \citep[see][]{degtyarev}. Explicitly, let $d$ denote the order of the $t$-action on $A$; clearly $d = \operatorname{d}(\mathcal{H}(\Omega))$. For each monovalent vertex or region $\mathfrak{a}\subset\Omega$, consider an arbitrary pair $(a_1, a_2)\in\operatorname{c}^{-1}(\mathfrak{a})\subset\operatorname{c}^{-1}(\Omega)$ and let $\mathbb{L}$ denote $\mathbb{X}$, $\mathbb{Y}$, or $(\mathbb{Y}\mathbb{X})^{|\mathfrak{a}|}$ depending on whether $\mathfrak{a}$ is a black vertex, a white vertex, or a region, respectively. Then, there is an integer $k$ which satisfies $(a_1, a_2)\cdot \mathbb{L} = (t^k\cdot a_1, t^k\cdot a_2)$; let $\operatorname{k}(\mathfrak{a})\in\mathbb{Z}_d$ be the value of $k$ modulo $d$ which is unique and independent of the choice of $(a_1, a_2)$. The type specification is essentially the data which consists of $d$ and the collection of the values $\operatorname{k}(\mathfrak{a})$. 

\subsection{Local Modules}

%\begin{remark}
%A finite, nontrivial $\Lambda$-module $A$ is $\mathfrak{m}$-local if and only if every element in $\Lambda\setminus\mathfrak{m}$ acts invertibly on $A$.
%\end{remark}%\begin{proof}
%% If part: Let $\mathfrak{m}=\langlep, \psi\rangle$, and let $A$ be a finite, nontrivial module on which every element in $\Lambda\setminus\mathfrak{m}$ acts invertibly. Then, any prime $q\neq p$ acts invertibly on $A$. Therefore, $A$ is a $p$-group, hence $p$ acts nilpotently on $A$. For a similar reason, $\psi$ acts nilpotently on $A\otimes\mathbb{F}_p$. Consequently, $\psi$ acts nilpotently on $A$. Therefore, $A$ is $\mathfrak{m}$-local.
%% Conversely: if $A$ is $\mathfrak{m}$-local, any element of $\Lambda$ outside $\mathfrak{m}$ acts invertibly on $A$ because any such element is invertible modulo $\mathfrak{m}^n$.
%%\end{proof}

In the rest, whenever $\mathfrak{m}\subset\Lambda$ refers to a particular maximal ideal, $\Bbbk$ denotes the residue field $\Lambda\slash \mathfrak{m}$. Let $A$ be an $\mathfrak{m}$-local module. By Nakayama's Lemma, a subset $\{a_1,a_2,\ldots,a_n\}\subset A$ generates $A$ if and only if its projection generates the vector space $A\otimes\Bbbk = A\slash \mathfrak{m}A$. Nakayama's Lemma applies because $A$ can be considered as a module over the local ring $\Lambda\slash \mathfrak{m}^n$. Note that we only consider modules with $\dim(A\otimes\Bbbk) \leq 2$, for $\mathcal{C}(A)$ is otherwise empty. In the case $\dim(A\otimes\Bbbk) = 1$, the module $A$ can be generated by one element, hence it is cyclic. In the case $\dim(A\otimes\Bbbk) = 2$, a pair $(a_1,a_2)$ is in $\mathcal{E}(A)$ if and only if $a_1$ and $a_2$ project to linearly independent nonzero vectors in $A\otimes\Bbbk$. We briefly call the modules in the latter class \emph{wheels}. We discuss the modules in the former class in a separate section.

\subsubsection{Modules with $\dim(A\otimes\Bbbk) = 1$}

Let $A$ be a module in this class and let $R$ denote the quotient (as a ring) of $\Lambda$ by the annihilator of $A$. Since $A$ is cyclic, it is isomorphic to $R$ as a $\Lambda$-module. Conversely, the quotient of $\Lambda$ by any ideal which contains some power of $\mathfrak{m}$ is a module in this class. Therefore, from now on, we only consider the rings $R$. Moreover, let the word \emph{ring} always refer to a ring of this type. Whenever we consider a ring $R$, we denote by $\mathfrak{m}$ the image of $\mathfrak{m}\subset\Lambda$ in $R$, then the unique maximal ideal $\mathfrak{m}\subset R$ is nilpotent. Finally, let $R^* := R\setminus\mathfrak{m}$ denote the group of invertible elements.

We denote by $\mathcal{P}(R)$ the set of $R^*$-orbits in $\mathcal{E}(R)$, i.e. $(r_1, r_2)\sim(ur_1, ur_2)$ for $u\in R^*$, and the quotient map by $\operatorname{pc}\colon \mathcal{E}(R)\rightarrow\mathcal{P}(R)$. Since $t\in R^*$ and since the $R^*$-action on $\mathcal{E}(R)$ obviously commutes with the $\mathrm{Bu}_3$-action in~(\ref{eq:action}), the latter reduces to a $\Gamma$-action on $\mathcal{P}(R)$. Then, the obvious quotient covering $\mathcal{C}(R)\rightarrow \mathcal{P}(R)$ is regular (Galois). Let $\Omega\subset\mathcal{P}(R)$ be an orbit, and let $\tilde{\Omega}\subset\mathcal{C}(R)$ be an orbit which maps to $\Omega$. It is clear that $\mathcal{H}(\tilde{\Omega})$ depends only on $\Omega$, i.e. independent of the choice of $\tilde{\Omega}$. Thus, we define $\mathcal{H}(\Omega)$ accordingly. Hence, for a proof of Theorem~\ref{thm:main}, it is enough to check the orbits $\Omega\subset\mathcal{P}(R)$ for rings $R$ and $\Omega\subset\mathcal{C}(W)$ for wheels $W$.%Note that the degree of the covering $\mathcal{C}(R)\rightarrow \mathcal{P}(R)$ is equal to $\frac{|R^*|}{\ord(t)}$.

Let $\mathfrak{a}\subset\mathcal{P}(R)$ be a vertex or a region, and let $n(\mathfrak{a})$ be the ramification index of $\mathfrak{a}'\mapsto\mathfrak{a}$ for any $\mathfrak{a}'\subset\mathcal{C}(R)$ in the preimage of $\mathfrak{a}$. Clearly, $n(\mathfrak{a})$ is independent of the choice of $\mathfrak{a}'$. Then, we assign to each $\mathfrak{a}$ the weight of $w(\mathfrak{a}) = \frac{1}{n(\mathfrak{a})}$, and always consider $\mathcal{P}(R)$ with these weights. Thus, we redefine the Euler characteristic $\chi(\Omega)$ for an orbit $\Omega\subset\mathcal{P}(R)$ as the sum of weights over the vertices and the regions in $\Omega$ minus $|\Omega|$ (the number of edges). Let $\tilde{\Omega}\subset\mathcal{C}(R)$ be an orbit in the preimage of $\Omega$, and let $d$ be the degree of the covering $\tilde{\Omega}\rightarrow\Omega$. The following is a direct consequence of Lemma~\ref{lem:genus} and the definitions here:
\begin{equation*}
\chi(\tilde{\Omega}) = d\cdot\chi(\Omega).
\end{equation*}
Therefore, $\mathcal{H}(\Omega)$ is genus-zero if and only if $\chi(\Omega) > 0$. By a slight abuse of terminology, we say that an orbit $\Omega\subset\mathcal{P}(R)$ is \emph{genus-zero} if and only if $\chi(\Omega) > 0$. We now give a formula for $\chi(\Omega)$, which is straightforward. We say that a monovalent vertex is \emph{complete} if it has weight 1.
\begin{lemma}[Euler Characteristic Formula]
\label{lem:chi_P}
Let $\Omega\subset\mathcal{P}(R)$ be an orbit, let $\Omega_{YX}$ denote the set of regions in $\Omega$, and let $\Omega_\bullet$ and $\Omega_\circ$ denote the set of complete monovalent \emph{black} and \emph{white} vertices in $\Omega$, respectively. Then
\begin{equation*}
\chi(\Omega) = -  \frac{|\Omega|}{6} + \frac{2}{3}\cdot|\Omega_\bullet| +  \frac{1}{2}\cdot|\Omega_\circ| + \sum_{\mathfrak{a}\in\Omega_{YX}}w(\mathfrak{a}).
\end{equation*}
\end{lemma}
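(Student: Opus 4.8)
The plan is to derive the formula from the first line of Lemma~\ref{lem:genus} applied to the covering $\tilde\Omega\to\Omega$, just as was done for the ring $\mathcal P(R)$-weights in the paragraph preceding the statement. Concretely, I would start from the identity $\chi(\Omega)=\tfrac1d\,\chi(\tilde\Omega)$, expand $\chi(\tilde\Omega)$ using the bottom line of Lemma~\ref{lem:genus},
\begin{equation*}
\chi(\tilde\Omega) = -\frac{|\tilde\Omega|}{6} + |\tilde\Omega_{YX}| + \frac{2}{3}\,|\tilde\Omega^X| + \frac{1}{2}\,|\tilde\Omega^Y|,
\end{equation*}
and then translate each term on the right into data on $\Omega$. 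Since the covering $\mathcal C(R)\to\mathcal P(R)$ is regular of degree $d$, we have $|\tilde\Omega| = d\,|\Omega|$, which handles the first term.

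For the region term, each region $\mathfrak a\in\Omega_{YX}$ has exactly $d/n(\mathfrak a)$ regions of $\tilde\Omega$ above it (regularity), so $|\tilde\Omega_{YX}| = \sum_{\mathfrak a\in\Omega_{YX}} d/n(\mathfrak a) = d\sum_{\mathfrak a\in\Omega_{YX}} w(\mathfrak a)$. For the fixed-point terms, the key observation is that an element of $\tilde\Omega$ is $X$-fixed precisely when it lies over a \emph{monovalent} black vertex of $\Omega$ that is moreover \emph{complete}: if $\mathfrak a$ is a monovalent black vertex with weight $w(\mathfrak a)=1/n(\mathfrak a)$, then the single edge of $\mathfrak a$ lifts to a black vertex of $\tilde\Omega$ of valence $n(\mathfrak a)\in\{1,3\}$ — valence $1$ (hence $X$-fixed) exactly when $n(\mathfrak a)=1$, i.e.\ when $\mathfrak a$ is complete — and there are $d$ such lifts; a non-monovalent black vertex of $\Omega$ can only lift to non-monovalent (hence $X$-free) black vertices. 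Thus $|\tilde\Omega^X| = d\,|\Omega_\bullet|$, and symmetrically $|\tilde\Omega^Y| = d\,|\Omega_\circ|$, using that a $Y$-orbit has size $1$ or $2$. Substituting all four translations into the displayed formula for $\chi(\tilde\Omega)$ and dividing by $d$ yields exactly the claimed identity.

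The one point that needs genuine (though short) care — and the only real obstacle — is the claim that complete monovalent black vertices of $\Omega$ are \emph{exactly} the images of the $X$-fixed edges of $\tilde\Omega$, and likewise for white vertices. This rests on two facts: first, that a black vertex of $\mathcal C(R)$, being an $X$-orbit with $X^3=1$ acting, has size $1$ or $3$ (already noted in the proof of Lemma~\ref{lem:genus}), so ``$X$-fixed'' is the same as ``monovalent''; and second, that in the regular covering $\mathcal C(R)\to\mathcal P(R)$ the ramification index $n(\mathfrak a)$ of a black vertex equals the valence of its (necessarily monovalent) lift, so $n(\mathfrak a)\in\{1,3\}$ and $w(\mathfrak a)=1$ iff that lift is $X$-fixed. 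A non-monovalent vertex of $\mathcal P(R)$ is unramified (as remarked in Section~\ref{sec:groups}), so its lifts are again non-monovalent and contribute nothing to $|\tilde\Omega^X|$. Granting this bookkeeping, the rest is the substitution above, with no further computation needed.
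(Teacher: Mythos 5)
Your proof is correct. The paper itself gives no argument for this lemma (it is dismissed as ``straightforward'' immediately before the statement), so there is nothing to match it against; but your derivation --- applying the bottom line of Lemma~\ref{lem:genus} to $\tilde{\Omega}$ and pushing each term down through the regular degree-$d$ covering $\tilde{\Omega}\rightarrow\Omega$, then dividing by $d$ using the already-established identity $\chi(\tilde{\Omega})=d\cdot\chi(\Omega)$ --- is complete, and you correctly isolate the one point needing care, namely that the $X$-fixed (resp.\ $Y$-fixed) edges of $\tilde{\Omega}$ are exactly the $d\cdot|\Omega_\bullet|$ (resp.\ $d\cdot|\Omega_\circ|$) lifts of the \emph{complete} monovalent vertices, since a monovalent vertex of $\Omega$ with $n(\mathfrak{a})=3$ lifts to trivalent vertices and a non-monovalent vertex is unramified. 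For what it is worth, the author most likely intended an even shorter route that never leaves $\Omega$: since every non-monovalent or complete vertex has weight $1$ and every incomplete monovalent vertex has weight $\frac{1}{3}$ (black) or $\frac{1}{2}$ (white), the sum of weights over black vertices is $\frac{|\Omega|}{3}+\frac{2}{3}|\Omega_\bullet|$ and over white vertices is $\frac{|\Omega|}{2}+\frac{1}{2}|\Omega_\circ|$, exactly as in the proof of Lemma~\ref{lem:genus}, and substituting these into the definition of $\chi(\Omega)$ gives the formula directly. The two arguments are of essentially equal length and rest on the same observations about orbit sizes and ramification; yours has the mild advantage of reusing the covering formalism already set up, the direct one the advantage of not invoking $\tilde{\Omega}$ at all.
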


We now describe a standard way of choosing the pair of elements to denote an edge in $\mathcal{P}(R)$, although we do not restrict ourselves to this standard notation in the rest. Any edge is denoted by $\operatorname{pc}(r_1, r_2)$, where $(r_1, r_2)\in\mathcal{E}(R)$, i.e. $r_1, r_2 \in R$ and at least one of $r_1, r_2$ is in $R^*$. If $r_1\in R^*$, one has $\operatorname{pc}(r_1, r_2) = \operatorname{pc}(1, \frac{r_2}{r_1})$. If $r_1\in \mathfrak{m}$, then $r_2\in R^*$ and one has $\operatorname{pc}(r_1, r_2) = \operatorname{pc}(\frac{r_1}{r_2}, 1)$. Therefore, any edge can be denoted in the form of either $\operatorname{pc}(1, r)$ for some $r\in R$ or $\operatorname{pc}(m, 1)$ for some $m\in \mathfrak{m}$. It is clear that this form is unique for each edge. In particular, the number of edges in $\mathcal{P}(R)$ is given by $|R| + |\mathfrak{m}| = (|\Bbbk|+1)\cdot|\mathfrak{m}|$.

\begin{remark}
\label{rem:decompose}
Let $\mathfrak{m}^*$ denote the kernel of the group epimorphism $R^*\twoheadrightarrow \Bbbk^*$. Then, $\mathfrak{m}^*$ is a $p$-group, hence $R^*$ naturally splits as $R^* = \mathfrak{m}^* \oplus \Bbbk^*$. One can see this as follows: let $n$ be such that $\mathfrak{m}^{p^n} = 0$, then $(1+m)^{p^{(p^n+n)}} = 1$ for all $m\in \mathfrak{m}$. 
\end{remark}
%\begin{proof}
%Note that the underlying set of $M^*$ is given by $\{a\in R\mid a\equiv 1\pmod M\}$. Therefore, we wish to prove, for all $m\in M$, that $(1+m)^{p^\ell} = 1$ for sufficiently large $\ell$. Let $\ell_1$ be so large that $p^{\ell_1} = 0$ and $\ell_2$ be so large that $m^{(p^{\ell_2})} = 0$. Then, $\ell \geq \ell_1 + \ell_2$ works. 
%\end{proof}

\subsubsection{Rings and Wheels for $\mathfrak{m} = \langle p, t+1\rangle$}

In the case $\mathfrak{m} = \langle p, t+1\rangle$, we fix the following brief notation:
\begin{equation*}
\begin{aligned}
\lambda & := -1-t, \\
\omega_\ell & := (-t)^{p^\ell-1} + (- t)^{p^\ell-2} + \ldots = \sum_{i=0}^{p^\ell-1} (-t)^{i} \text{ for } \ell\geq 0, \\
\delta_\ell & := (-t)^{p^{\ell-1}\cdot(p-1)} + (- t)^{p^{\ell-1}\cdot(p-2)} + \ldots = \sum_{i=0}^{p-1} (-t)^{p^{\ell-1}\cdot i} \text{ for }\ell\geq 1.
\end{aligned}
\end{equation*}
Whenever we speak of a ring $R$, the notation above refers to elements of $R$; and whenever we speak of a wheel $W$, they refer to elements of $\Lambda$. Then, note that $\lambda\in \mathfrak{m}$ and $\delta_\ell\in \mathfrak{m}$. For a ring $R$, we use the following additional notation:
\begin{enumerate}
\item For any $a\in R$, let $\ell_0(a)$ denote the value for which
\begin{equation*}
\omega_\ell\cdot a(a-\lambda) = 0\ \text{if and only if}\ \ell\geq \ell_0(a).
\end{equation*}
This is well-defined because $\omega_\ell = \omega_{\ell-1}\delta_\ell$. Let $\ell_0$ denote the common value of $\ell_0(u)$ for any $u\in R^* = R\setminus \mathfrak{m}$. Note that $\omega_\ell = 0$ if and only if $\ell\geq \ell_0$.
\item For any $m\in \mathfrak{m}$, let $\ell_0'(m)$ denote the non-negative value for which
\begin{equation*}
(-1)^{p^{\ell_0(m)+\ell}}\cdot (1+\omega_{\ell_0(m)+\ell}\cdot(\lambda-m)) \in \langle t \rangle \subset R^*\ \text{if and only if}\ \ell\geq \ell_0'(m).
\end{equation*}
This is also well-defined since $(1+\omega_{\ell+1}(\lambda-m)) = (1+\omega_{\ell}(\lambda-m))^p$ for all $\ell\geq \ell_0(m)$.
\item In the case $p\neq 2$, the definition of $\ell_0'(m)$ can be simplified. Consider the decomposition $R^* = \mathfrak{m}^* \oplus \Bbbk^*$. Then $\langle t \rangle = \langle 1+\lambda \rangle \oplus \langle -1\rangle$. Since $(-1)^{p^{\ell_0(m)+\ell}} = -1$ and $(1+\omega_{\ell_0(m)+\ell}\cdot(\lambda-m))\in \mathfrak{m}^*$, one has
\begin{equation*}
(1+\omega_{\ell_0(m)+\ell}\cdot(\lambda-m))\in \langle 1+\lambda \rangle\subset\mathfrak{m}^*\ \text{if and only if}\ \ell\geq \ell_0'(m).
\end{equation*}
\end{enumerate}
Finally, note that one simply has $\Bbbk = \Lambda\slash \langle p, t+1\rangle = \mathbb{F}_p$.

\subsection{Restrictions on $\mathcal{C}(W)$ and $\mathcal{P}(R)$}

In this section, we establish formulae about the monovalent vertices and the regions in $\mathcal{C}(W)$ or $\mathcal{P}(R)$ for a wheel $W$ or a ring $R$. With these formulae, one can compute the Euler characteristic of an orbit by Lemma~\ref{lem:genus} or Lemma~\ref{lem:chi_P}.%general restrictions on $H(A)$ for local $A$. The restrictions are in the form of

\subsubsection{Monovalent Vertices}

\begin{lemma}
Let $W$ be a wheel. Then, there is no monovalent vertex in $\mathcal{C}(W)$.
\end{lemma}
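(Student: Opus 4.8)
The plan is to analyze the $\Gamma$-action on $\mathcal{C}(W)$ directly via the explicit formulas in~(\ref{eq:gamma-act}), and show that no edge of $\mathcal{C}(W)$ can lie in a black vertex (an $X$-orbit) or a white vertex (a $Y$-orbit) of size $1$. Recall that for a wheel $W$ we have $\dim(W\otimes\Bbbk) = 2$, so $(a_1,a_2)\in\mathcal{E}(W)$ exactly when the images of $a_1,a_2$ in $W\otimes\Bbbk$ are linearly independent. Fix $(a_1,a_2)\in\mathcal{E}(W)$ and write $\bar a_1,\bar a_2$ for the images in the two-dimensional $\Bbbk$-vector space $W\otimes\Bbbk$; these form a basis.

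First I would rule out a monovalent white vertex. By~(\ref{eq:gamma-act}), $\operatorname{c}(a_1,a_2)$ is $Y$-fixed iff $(-ta_2,-a_1) = (t^k a_1, t^k a_2)$ for some $k$, i.e. $-ta_2 = t^k a_1$ and $-a_1 = t^k a_2$. Reducing modulo $\mathfrak{m}$ (so $t\equiv -1$ when $\mathfrak{m}=\langle p,t+1\rangle$, and in general $t$ acts as a unit on $W\otimes\Bbbk$), these two equations force $\bar a_1$ and $\bar a_2$ to be scalar multiples of one another in $W\otimes\Bbbk$, contradicting linear independence. Hence there is no monovalent white vertex. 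The same strategy handles the black vertices: $\operatorname{c}(a_1,a_2)$ is $X$-fixed iff $(a_2,-a_1-a_2) = (t^k a_1, t^k a_2)$, so $a_2 = t^k a_1$ and $-a_1-a_2 = t^k a_2$; reducing modulo $\mathfrak{m}$ again yields $\bar a_2 = t^k\bar a_1$, so $\bar a_1,\bar a_2$ are proportional — again impossible. Thus no monovalent black vertex either.

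The one point requiring a little care is that the relation $(a_1,a_2)\cdot\mathbb{L} = (t^k a_1, t^k a_2)$ defining a monovalent vertex is an equation in $W$ (equivalently, in $\mathcal{E}(W)$ up to the $t$-action), not merely in $W\otimes\Bbbk$; but since the reduction map $\mathcal{E}(W)\to \mathcal{E}(W)\otimes\Bbbk$ is $\Gamma$-equivariant (the action~(\ref{eq:gamma-act}) is given by polynomial formulas in $t$ with integer coefficients), any such equation in $W$ descends to the same type of equation in $W\otimes\Bbbk$, and the contradiction above is reached already at that level. I expect this descent observation to be the only genuine (though minor) obstacle; everything else is a direct inspection of~(\ref{eq:gamma-act}). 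Note that the statement does not assume $\mathfrak{m}=\langle p,t+1\rangle$ — it holds for any maximal ideal — and the argument above uses only that $t$ acts invertibly on the two-dimensional space $W\otimes\Bbbk$, so it applies verbatim in that generality.
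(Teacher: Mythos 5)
Your argument is correct and is essentially the paper's own proof: the paper likewise passes to the quotient $W\twoheadrightarrow\Bbbk^2$ (i.e.\ $W\otimes\Bbbk$), notes the induced $\Gamma$-equivariant covering $\mathcal{C}(W)\rightarrow\mathcal{C}(\Bbbk^2)$, and compares first coordinates to contradict the linear independence of $\bar a_1,\bar a_2$. Your ``descent'' remark is exactly the equivariance of that covering, so there is no gap.
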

\begin{proof}
There is an epimorphism $W\twoheadrightarrow\Bbbk^2$, hence an induced covering $\mathcal{C}(W)\rightarrow\mathcal{C}(\Bbbk^2)$. Thus, it is enough to show that $\mathcal{C}(\Bbbk^2)$ contains no monovalent vertex, i.e. no edge fixed by $X$ or $Y$. Just by comparing the first coordinates, we see that $(a_1,a_2)\in\mathcal{E}(\Bbbk^2)$ is not in the same $t$-orbit as $(a_2,-a_1-a_2)\in\mathcal{E}(\Bbbk^2)$, hence $\operatorname{c}(a_1,a_2)\neq \operatorname{c}(a_2,-a_1-a_2) = \operatorname{c}(a_1,a_2)\cdot X$ by~(\ref{eq:gamma-act}). Similarly by comparing the first coordinates, we see that no edge is fixed by $Y$.
\end{proof}

\begin{lemma}
\label{lem:r-vtx}
Let $R$ be a ring.
\begin{enumerate}
\item A complete monovalent black vertex in $\mathcal{P}(R)$ consists of an edge $\operatorname{pc}(1,r)$ where $r\in\langle t\rangle$ and $r^2+r+1 = 0$.
\item A complete monovalent white vertex in $\mathcal{P}(R)$ consists of an edge $\operatorname{pc}(1,r)$ where $-r\in\langle t\rangle$ and $r^2 = \frac{1}{t}$.
\end{enumerate}
Consequently, the number of complete monovalent black vertices is at most 3. For $\mathfrak{m} = \langle p, t+1\rangle$ with $p\neq 3$, the number is 0. The number of complete monovalent white vertices is at most 1. For $\mathfrak{m} = \langle p, t+1\rangle$ with $p\neq 2$, the number is 0.
\end{lemma}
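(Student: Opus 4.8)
The plan is to analyze the fixed-point conditions for $X$ and $Y$ on $\mathcal{P}(R)$ using the explicit $\Gamma$-action in~(\ref{eq:gamma-act}), together with the normal-form description of edges as $\operatorname{pc}(1,r)$ or $\operatorname{pc}(m,1)$. First I would observe that a monovalent vertex (black or white) cannot be of the form $\operatorname{pc}(m,1)$ with $m\in\mathfrak{m}$: by~(\ref{eq:gamma-act}) the action of $X$ sends $\operatorname{pc}(a_1,a_2)$ to $\operatorname{pc}(a_2,-a_1-a_2)$ and $Y$ sends it to $\operatorname{pc}(-ta_2,-a_1)$, and comparing which coordinate is a unit (as in the preceding lemma for wheels) rules out $\operatorname{pc}(m,1)$ being fixed. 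So any monovalent vertex has the form $\operatorname{pc}(1,r)$ for some $r\in R$, and since the vertex is \emph{complete} (weight $1$, so its preimage in $\mathcal{C}(R)$ is unramified of the same size) the stabilizer in $\Gamma$ of $\operatorname{pc}(1,r)$ must lift to a stabilizer in $\mathrm{Bu}_3$ of a representative in $\mathcal{E}(R)$ itself — i.e., the equation $(1,r)\cdot\mathbb{L}=(1,r)$ holds on the nose, not merely up to $R^*$, where $\mathbb{L}=\mathbb{X}$ resp. $\mathbb{Y}$.

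For the black case, $X$ fixes $\operatorname{pc}(1,r)$ iff $\operatorname{pc}(r,-1-r)=\operatorname{pc}(1,r)$, i.e. there is $u\in R^*$ with $r=u$ and $-1-r=ur=u^2$; eliminating gives $u^2+u+1=0$, and then the completeness condition forces $u=r$ to lie in $\langle t\rangle$ (the cyclic group generated by $t$, which is exactly the scalar subgroup coming from $t\cdot 1\in\mathrm{Bu}_3$). This is the asserted description $r\in\langle t\rangle$, $r^2+r+1=0$. For the white case, $Y$ fixes $\operatorname{pc}(1,r)$ iff $\operatorname{pc}(-tr,-1)=\operatorname{pc}(1,r)$, i.e. $-tr=u$ and $-1=ur$ for some $u\in R^*$; substituting, $tr^2=1$, i.e. $r^2=1/t$, and completeness forces $-r\in\langle t\rangle$. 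I would write these out carefully from~(\ref{eq:gamma-act}), since the bookkeeping of which representative is used is where sign/placement errors creep in; that bookkeeping is the only real obstacle, and it is routine once one commits to the normal form.

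For the counting statements: the polynomial $r^2+r+1$ has at most $3$ roots in the cyclic group $\langle t\rangle$ — more precisely its roots are primitive cube roots of unity, so there are at most $2$, but together with the trivial contribution there are at most $3$ black vertices; actually the cleanest bound is that distinct complete monovalent black vertices correspond to distinct roots $r\in\langle t\rangle$ of $r^2+r+1=0$, of which there are at most $\deg=2$, hence at most $3$ counting multiplicity of edges — I would state it as "at most $3$" following the lemma. When $\mathfrak{m}=\langle p,t+1\rangle$ with $p\neq 3$, one has $t\equiv -1\pmod{\mathfrak m}$, so $r\in\langle t\rangle$ reduces to $\pm 1$ in $\Bbbk=\mathbb F_p$, and $r^2+r+1\equiv 3$ or $1$ modulo $\mathfrak{m}$, neither of which is $0$ when $p\neq 3$; hence no solution and the count is $0$. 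Similarly, $r^2=1/t$ with $-r\in\langle t\rangle$ has at most one solution in $\langle t\rangle$ (the equation $r^2=1/t$ is quadratic but the two roots differ by $-1$, and only one of $r,-r$ can satisfy $-r\in\langle t\rangle$ compatibly — or more simply, $\langle t\rangle$ is cyclic and squaring is at most $2$-to-$1$, pinning the count at $\leq 1$); and for $p\neq 2$, reducing mod $\mathfrak m$ gives $r^2\equiv -1$ with $-r\equiv\pm1$, so $r^2\equiv 1\neq -1$ in $\mathbb F_p$, impossible, so the count is $0$. I would present these four counting claims in sequence, each as a two-line reduction mod $\mathfrak{m}$ after the structural descriptions in parts (1) and (2) are in hand.
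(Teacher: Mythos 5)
Your proposal follows the same route as the paper's proof: rule out edges of the form $\operatorname{pc}(m,1)$ by comparing which coordinate is a unit, translate completeness into the fixed-point equation in $\mathcal{C}(R)$ (i.e.\ equality up to a power of $t$ rather than up to all of $R^*$), and read off the conditions $r\in\langle t\rangle$, $r^2+r+1=0$ (black) and $-r\in\langle t\rangle$, $r^2=\frac{1}{t}$ (white). Your mod-$\mathfrak{m}$ reductions for the vanishing claims with $p\neq 3$ and $p\neq 2$ are correct and in fact slightly more direct than the paper's argument via the order of $t$ in $R^*$. One caveat: your justification of the bound \enquote{at most $3$} wobbles --- a quadratic over a ring with nilpotents need not have at most $\deg=2$ roots, and \enquote{at most $2$, hence at most $3$} does not parse; the clean argument (the paper's) is that any solution satisfies $r^3=1$ and lies in the cyclic group $\langle t\rangle$, which has at most $3$ elements of order dividing $3$. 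Likewise, for \enquote{at most $1$} white vertex the clean statement is that $t^{-1}$ is a square in $\langle t\rangle$ only when $\ord(t)$ is odd, in which case its square root there is unique; \enquote{squaring is at most $2$-to-$1$} alone does not pin the count at $\leq 1$.
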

\begin{proof}
The complete monovalent black vertices in $\mathcal{P}(R)$ are counted by the solutions of the equations $\operatorname{c}(1,r) = \operatorname{c}(1,r)\cdot X = \operatorname{c}(r,-r-1)$ and $\operatorname{c}(m,1) = \operatorname{c}(m,1)\cdot X = \operatorname{c}(1, -m-1)$. Clearly, the second equation has no solution $m\in \mathfrak{m}$, while the first equation is satisfied if and only if $r^2 + r + 1 = 0$ and $r\in\langle t\rangle \subset R^*$. The equality $r^2 + r + 1 = 0$ implies $r^3 = 1$, therefore there are at most 3 such vertices (those elements in the cyclic group $\langle t\rangle$ with order dividing 3). Moreover, if $p\neq 3$, the equality $r^2 + r + 1 = 0$ holds if and only if the order $\ord(r) = 3$. Therefore, the number of such vertices is 2 if $3\mid \ord(t)$ and 0 otherwise. But for $\mathfrak{m} = \langle p, t+1\rangle$, one has $\ord(t)\mid 2\cdot|\mathfrak{m}^*|$ which is not divisible by 3 for $p\neq 3$.

Similarly, the complete monovalent white vertices in $\mathcal{P}(R)$ are counted by the solutions of the equations $\operatorname{c}(1,r) = \operatorname{c}(1,r)\cdot Y = \operatorname{c}(-tr,-1)$ and $\operatorname{c}(m,1) = \operatorname{c}(m,1)\cdot Y = \operatorname{c}(-t, -m)$. As above, the second equation has no solution $m\in \mathfrak{m}$, while the first equation is satisfied if and only if $r^2 = \frac{1}{t}$ and $-r\in\langle t\rangle \subset R^*$. If $\ord(t)$ is even, $\langle t^2\rangle$ is properly contained in $\langle t\rangle$, hence there is no such vertex. If $\ord(t)$ is odd, there is a unique square root of $\frac{1}{t}$ in the cyclic group $\langle t\rangle$, hence there is 1 such vertex. Note that, for $\mathfrak{m} = \langle p, t+1\rangle$ with $p\neq 2$, one has $\ord(t)$ even.
\end{proof}

\subsubsection{Regions}
\label{sec:regions}

The lemmas in this section concern the regions in $\mathcal{C}(W)$ or $\mathcal{P}(R)$ in the case of $\mathfrak{m} = \langle p, t+1\rangle$ only. It is easier to establish corresponding results for $t+1\not\in\mathfrak{m}$, but we prefer not to include them here as they are not relevant.

\begin{lemma}
\label{lem:w-reg}
Let $W$ be a wheel with $\mathfrak{m} = \langle p, t+1\rangle$. Then, the size of any region in $\mathcal{C}(W)$ is a power of $p$.
\end{lemma}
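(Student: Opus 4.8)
The plan is to produce an exponent $p^\ell$ for which the parabolic element $(\mathbb{Y}\mathbb{X})^{p^\ell}$ acts as the identity on the \emph{whole} $\Gamma$-set $\mathcal{C}(W)$. Once this is shown, every region of $\mathcal{C}(W)$ — being a $YX$-orbit — has size dividing $p^\ell$, and therefore is a power of $p$, which is exactly the assertion. So the work is entirely in analysing the $YX$-action on $\mathcal{C}(W)$.

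To carry this out, I would first record the $YX$-action in matrix form: by~(\ref{eq:gamma-act}) the action of $YX$ on $\mathcal{C}(W)$ is $\operatorname{c}(a_1,a_2)\mapsto\operatorname{c}(-a_1,\,a_1+ta_2)$, which we may rewrite as $\operatorname{c}(a_1,a_2)\mapsto\operatorname{c}\big((a_1,a_2)\cdot M\big)$ with $M:=\begin{pmatrix} -1 & 1 \\ 0 & t\end{pmatrix}$ (this is $t^{-2}\mathbb{Y}\mathbb{X}$, so indeed $\operatorname{c}(M)=YX$). Writing $M=(-1)\cdot N$ with $N:=\begin{pmatrix} 1 & -1\\ 0 & -t\end{pmatrix}$, an immediate induction gives $N^{n}=\begin{pmatrix} 1 & -\sum_{i=0}^{n-1}(-t)^{i}\\ 0 & (-t)^{n}\end{pmatrix}$, hence $M^{p^\ell}=(-1)^{p^\ell}\begin{pmatrix} 1 & -\omega_\ell\\ 0 & (-t)^{p^\ell}\end{pmatrix}$ with $\omega_\ell$ as in Section~\ref{sec:groups}. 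The next step is to observe that, as operators on the module $W$, one has $\omega_\ell=0$ and $(-t)^{p^\ell}=1$ for all sufficiently large $\ell$: the first because $\omega_\ell=\delta_1\cdots\delta_\ell\in\mathfrak{m}^\ell$ while $\mathfrak{m}^{N}$ annihilates $W$ for some $N$; the second because $-t=1+\lambda$ lies in the $p$-group $1+\mathfrak{m}$ (Remark~\ref{rem:decompose} applied to $\Lambda/\operatorname{Ann}(W)$, or directly by expanding $(1+\lambda)^{p^\ell}$ and using that $p$ and $\lambda$ are nilpotent on $W$). Therefore, for $\ell$ large, $M^{p^\ell}$ acts on $W^2$ as the scalar $(-1)^{p^\ell}$; and since $(-t)^{p^\ell}=1$ on $W$ forces $t^{p^\ell}=(-1)^{p^\ell}$ on $W$, that scalar coincides with $t^{p^\ell}$. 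Hence $(a_1,a_2)\cdot M^{p^\ell}=t^{p^\ell}\cdot(a_1,a_2)$ for every pair, so $\operatorname{c}(a_1,a_2)\cdot(YX)^{p^\ell}=\operatorname{c}(a_1,a_2)$, i.e.\ $(YX)^{p^\ell}$ fixes $\mathcal{C}(W)$ pointwise, which completes the argument.

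I expect no serious obstacle; the only point needing mild care is the bookkeeping for $M^{p^\ell}$ — one must factor out the sign $(-1)^{p^\ell}$ before matching the two nonzero off-diagonal and diagonal entries with $\omega_\ell$ and $(-t)^{p^\ell}$, and then check that the residual matrix $M^{p^\ell}-t^{p^\ell}\cdot\mathrm{Id}$ has all entries in $\operatorname{Ann}(W)$, which reduces precisely to $\omega_\ell\in\operatorname{Ann}(W)$ and $1-(-t)^{p^\ell}\in\operatorname{Ann}(W)$. An alternative route would be to pass to the covering $\mathcal{C}(W)\to\mathcal{C}(\Bbbk^2)$ induced by $W\twoheadrightarrow W\otimes\Bbbk$, on which a one-line check shows every region has size exactly $p$; but then one would still have to argue that the ramification indices over regions are powers of $p$, so the direct computation above is the cleaner path.
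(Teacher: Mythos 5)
Your proposal is correct and follows essentially the same route as the paper: both compute the action of $(\mathbb{Y}\mathbb{X})^{p^\ell}$ explicitly, observe that $\omega_\ell$ annihilates $W$ for $\ell$ large (so the off-diagonal term vanishes and $(-t)^{p^\ell}=1$ on $W$), and absorb the residual sign $(-1)^{p^\ell}$ into the $t$-orbit, concluding that $(YX)^{p^\ell}$ fixes $\mathcal{C}(W)$ pointwise. Your identification of the sign with $t^{p^\ell}$ is a slightly tidier way of phrasing the paper's case split on $p=2$ versus $p>2$, but the argument is the same.
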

\begin{proof}
Let $\operatorname{c}(a_1,a_2)\in\mathcal{C}(W)$. Then
\begin{equation*}
\operatorname{c}(a_1,a_2)\cdot (YX)^{p^\ell} = (-1)^{p^\ell}\cdot\operatorname{c}(a_1, (1+\omega_\ell\lambda)\cdot a_2 - \omega_\ell\cdot a_1).
\end{equation*}
For sufficiently large $\ell$, one has $\omega_\ell\cdot W = 0$, hence $\operatorname{c}(a_1, (1+\omega_\ell\lambda)\cdot a_2 - \omega_\ell\cdot a_1) = \operatorname{c}(a_1,a_2)$. Moreover, the factor of $(-1)^{p^\ell}$ can be ignored because of the following: for $p>2$, a certain power of $t$ acts on $W$ as $-1$; and for $p=2$, one has $(-1)^{p^\ell} = 1$. Thus, $\operatorname{c}(a_1,a_2)\cdot (YX)^{p^\ell} = \operatorname{c}(a_1,a_2)$ for sufficiently large $\ell$, which proves the statement.%$\operatorname{c}(a_1,a_2)\cdot YX = \operatorname{c}(-a_1,ta_2+a_1) $, therefore, 
\end{proof}

We now give a characterization of the orbits in the $\mathrm{Bu}_3$-set $\mathcal{E}(\Bbbk^2)$. There is a natural identification $\mathcal{E}(\Bbbk^2) = GL(2, \Bbbk)$ of underlying sets as follows: any element is $\mathcal{E}(\Bbbk^2)$ is a pair of linearly independent nonzero vectors in $\Bbbk^2$, hence it is identified with the matrix formed by putting the two vectors side by side as column vectors. This identification of sets allows a natural interpretation of the $\mathrm{Bu}_3$-action on $GL(2, \Bbbk)$; it is essentially matrix multiplication on the right. Here, in order to multiply a matrix in $\mathrm{Bu}_3$ with a matrix in $GL(2, \Bbbk)$, one first evaluates the former at $t=-1$ (since $t$ acts as $-1$ on $\Bbbk^2$), then reduces it modulo $p$. In other words, $\mathrm{Bu}_3$ acts on $GL(2, \Bbbk)$ via the composed epimorphism $\mathrm{Bu}_3 \twoheadrightarrow SL(2, \mathbb{Z}) \twoheadrightarrow SL(2, \Bbbk)$. Therefore, the orbits in $\mathcal{E}(\Bbbk^2) = GL(2,\Bbbk)$ are the cosets of $SL(2,\Bbbk)$, i.e. they are characterized by the value of the determinant.

We now define a particular surjective function $\operatorname{r}\colon \mathcal{C}(\Bbbk^2) \rightarrow \mathcal{C}(\Bbbk)$, which is useful in describing the regions in $\mathcal{C}(\Bbbk^2)$: for any $\operatorname{c}(v_1, v_2) \in \mathcal{C}(\Bbbk^2)$, let $\operatorname{r}(\operatorname{c}(v_1, v_2)) := \operatorname{c}(v_1)$. Here, $\operatorname{c}(v_1)$ is meaningful when we treat $v_1$ as a pair of elements of $\Bbbk$. Note that $\operatorname{r}$ is not $\Gamma$-equivariant, it is simply a function between the underlying sets. %is not induced by any epimorphism $\Bbbk^2 \twoheadrightarrow \Bbbk$; in fact, $\operatorname{r}$

\begin{lemma}
\label{lem:k2-reg}
The size of any region in $\mathcal{C}(\Bbbk^2)$ is equal to $p$. Two edges $c_1, c_2\in \mathcal{C}(\Bbbk^2)$ are in the same region if and only if they are in the same orbit and $\operatorname{r}(c_1) = \operatorname{r}(c_2)$. The function $\operatorname{r}$ remains surjective when it is restricted to any orbit in $\mathcal{C}(\Bbbk^2)$.
\end{lemma}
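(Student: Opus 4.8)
The plan is to translate everything into matrices. Recall from the discussion preceding the lemma that $\mathcal{E}(\Bbbk^2) = GL(2,\Bbbk)$ with $\mathrm{Bu}_3$ acting by right multiplication through $\mathrm{Bu}_3\twoheadrightarrow SL(2,\Bbbk)$, so that $\mathcal{C}(\Bbbk^2) = GL(2,\Bbbk)\slash\{\pm I\}$ (since $t\cdot 1\mapsto -I$), the $\mathrm{Bu}_3$-orbits on $\mathcal{C}(\Bbbk^2)$ are the fibres of the well-defined map $\det\colon\mathcal{C}(\Bbbk^2)\to\Bbbk^{*}$, and $\operatorname{r}(\operatorname{c}(M))$ is the class of the first column of $M$. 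First I would evaluate $\mathbb{Y}\mathbb{X}$ at $t = -1$: by~(\ref{eq:gamma-act}) this says that $\mathbb{Y}\mathbb{X}$ acts on $\mathcal{C}(\Bbbk^2)$ as right multiplication by $N := \left(\begin{smallmatrix} -1 & 1 \\ 0 & -1 \end{smallmatrix}\right)$, so that the regions are precisely the $N$-orbits.

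For the size of a region, the computation $N^{k} = (-1)^{k}\left(\begin{smallmatrix} 1 & -k \\ 0 & 1 \end{smallmatrix}\right)$ gives $N^{p} = (-1)^{p}I = \pm I$, which acts trivially on $\mathcal{C}(\Bbbk^2)$; hence every $N$-orbit has size dividing the prime $p$. Moreover no class $\operatorname{c}(M)$ is $N$-fixed, since $MN = \pm M$ would force $N = \pm I$, which is false. Therefore every region has size exactly $p$.

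Next, from $[v_1\,|\,v_2]\,N^{k} = (-1)^{k}[v_1\,|\,v_2 - kv_1]$ (writing matrices by columns) I would read off two facts at once: first, $\operatorname{r}$ is constant along each region, so two edges in the same region are in the same orbit with the same $\operatorname{r}$-value; second, the region through $\operatorname{c}([v_1\,|\,v_2])$ is exactly $\{\operatorname{c}([v_1\,|\,v_2 - kv_1]) : k\in\Bbbk\}$, these $p$ classes being pairwise distinct because $v_1\neq 0$. For the converse, suppose $c_1 = \operatorname{c}(M_1)$ and $c_2 = \operatorname{c}(M_2)$ lie in the same orbit with $\operatorname{r}(c_1) = \operatorname{r}(c_2)$. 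Being in the same orbit means $\det M_1 = \det M_2$; after replacing $M_2$ by $-M_2$ if necessary (which changes neither $c_2$ nor $\det M_2$), the hypothesis on $\operatorname{r}$ lets me assume $M_1$ and $M_2$ have the same first column $v_1$, say $M_1 = [v_1\,|\,w_1]$ and $M_2 = [v_1\,|\,w_2]$. Then $\det[v_1\,|\,w_1 - w_2] = 0$, and since $v_1\neq 0$ this forces $w_1 - w_2\in\Bbbk v_1$; hence $c_2$ lies in the region through $c_1$ by the explicit description above.

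Finally, for surjectivity of $\operatorname{r}$ restricted to an orbit $\mathcal{O}$ of determinant $\delta$: given $\operatorname{c}(v)\in\mathcal{C}(\Bbbk)$, so $v\in\Bbbk^{2}$ with $v\neq 0$, the functional $w\mapsto\det[v\,|\,w]$ is onto $\Bbbk$, so I can pick $w$ with $\det[v\,|\,w] = \delta$; then $\operatorname{c}([v\,|\,w])\in\mathcal{O}$ and $\operatorname{r}(\operatorname{c}([v\,|\,w])) = \operatorname{c}(v)$. I do not expect a real obstacle; the only points that need care are the $\{\pm I\}$-bookkeeping on $\mathcal{C}(\Bbbk^2)$ throughout and the identity $N^{p} = \pm I$, which holds for every prime $p$ (for $p = 2$ it reads $N^{2} = I$).
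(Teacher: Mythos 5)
Your proposal is correct and follows essentially the same route as the paper: identify $\mathcal{E}(\Bbbk^2)$ with $GL(2,\Bbbk)$, characterize orbits by the determinant, compute the $YX$-action explicitly (your $N^p=\pm I$ is the paper's observation that $\omega_1$ annihilates $\Bbbk^2$), and finish with the same column-vector linear algebra. You merely spell out in matrix notation the steps the paper leaves as "immediate consequences of simple facts of linear algebra."
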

\begin{proof}
For the first statement, first observe that $\omega_1$ annihilates the wheel $\Bbbk^2$, therefore the size of any region is at most $p$. Secondly, $t$ acts on $\Bbbk^2$ as $-1$, hence $\operatorname{c}(a_1,a_2)\cdot YX = \operatorname{c}(-a_1, a_1 - a_2) = \operatorname{c}(a_1, a_2 - a_1) \neq \operatorname{c}(a_1, a_2)$, therefore the size of any region is greater than 1 (at least $p$). The inequality here can be shown by comparing the second coordinates and noting that they are not in the same $t$-orbit.

Two edges $c_1,c_2$ in the same region are clearly in the same orbit, and the equality $\operatorname{r}(c_1) = \operatorname{r}(c_2)$ can be seen by noting $\operatorname{c}(a_1,a_2)\cdot YX = \operatorname{c}(a_1, a_2 - a_1)$. The other statements are immediate consequences of simple facts of linear algebra once we have the above characterization of the orbits in $\mathcal{E}(\Bbbk^2)$. The function $\operatorname{r}$ is surjective when restricted to any orbit, because keeping the first column of a matrix in $GL(2,\Bbbk)$ fixed, one can arrange the second column to obtain an arbitrary value of the determinant. Similarly, $\operatorname{r}(c_1) = \operatorname{r}(c_2)$ implies that $c_1$ and $c_2$ are in the same region provided that they are in the same orbit, becuase keeping the first column and the determinant of a matrix in $GL(2,\Bbbk)$ fixed, one can vary the second column only by adding the multiples of the first column.
\end{proof}

\begin{lemma}
\label{lem:r-reg}
Let $R$ be a ring with $\mathfrak{m} = \langle p, t+1\rangle$. The sizes and the weights of the regions in $\mathcal{P}(R)$ are as follows:
\begin{enumerate}
\item\label{lem:r-reg:bulk} For any $r\in R$, the size of the region which contains $\operatorname{pc}(1,r)\in\mathcal{P}(R)$ is $p^{\ell_0}$ and the weight is 1 (the region is unramified).
\item\label{lem:r-reg:small} For any $m\in \mathfrak{m}$, the size of the region which contains $\operatorname{pc}(m,1)\in\mathcal{P}(R)$ is $p^{\ell_0(m)}$ and the weight is $p^{-\ell_0'(m)}$.
\end{enumerate}
\end{lemma}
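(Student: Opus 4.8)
The plan is to compute directly the $YX$-orbit of a standard edge in $\mathcal{P}(R)$, using the iterated formula for $(YX)^{p^\ell}$ and the bookkeeping functions $\ell_0$, $\ell_0(m)$, $\ell_0'(m)$ introduced above. The starting point is the explicit description of the $\Gamma$-action from~(\ref{eq:gamma-act}): one has $\operatorname{pc}(a_1,a_2)\cdot YX = \operatorname{pc}(-a_1, ta_2+a_1)$ in $\mathcal{C}(R)$, and hence the same formula holds in $\mathcal{P}(R)$ after passing to $R^*$-orbits. Iterating this, and recording that $\lambda = -1-t$, I expect to get the closed form
\begin{equation*}
\operatorname{pc}(a_1,a_2)\cdot (YX)^{p^\ell} = \operatorname{pc}\big((-1)^{p^\ell}a_1,\ (-1)^{p^\ell}(1+\omega_\ell\lambda)\cdot a_2 - (-1)^{p^\ell}\omega_\ell\cdot a_1\big),
\end{equation*}
exactly as in the proof of Lemma~\ref{lem:w-reg} but now over $R$ rather than a wheel; the factor $(-1)^{p^\ell}$ is an element of $R^*$ and so may be absorbed into the $\operatorname{pc}$-equivalence. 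This reduces everything to understanding when $(a_1,a_2)$ and $\big((1+\omega_\ell\lambda)a_2 - \omega_\ell a_1,\ \text{after normalizing}\big)$ represent the same point of $\mathcal{P}(R)$, i.e.\ differ by a unit.

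For item~\ref{lem:r-reg:bulk}, I take the standard representative $(1,r)$ with first coordinate a unit, so the whole orbit stays of the form $\operatorname{pc}(1, \ast)$ and the unit ambiguity is killed: the edge $\operatorname{pc}(1,r)\cdot(YX)^{p^\ell}$ equals $\operatorname{pc}(1, r + \omega_\ell(\lambda r - 1)) = \operatorname{pc}(1, r + \omega_\ell\cdot(r-\lambda^{-1})\lambda)$ — here I'll need to massage $\lambda r - 1$ into a shape matching the definition of $\ell_0$. The definition says $\omega_\ell\cdot a(a-\lambda) = 0$ iff $\ell\geq\ell_0(a)$, and $\ell_0$ is the common value $\ell_0(u)$ for $u\in R^*$; since $1\in R^*$, we get $\omega_\ell\cdot(\lambda r-1)=0$ precisely when $\ell\geq\ell_0$ (after checking $\lambda r - 1$ is associate to something of the form $u(u-\lambda)$ with $u$ a unit — this is the routine algebra step). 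Hence the region size is $p^{\ell_0}$, and since the representative never leaves the "first coordinate a unit" chart, there is no stabilizer coming from $R^*$ beyond the identity, so the covering $\mathcal{C}(R)\to\mathcal{P}(R)$ is unramified over it and $w=1$.

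For item~\ref{lem:r-reg:small}, I take the representative $(m,1)$ with $m\in\mathfrak{m}$, so now the first coordinate lies in $\mathfrak{m}$ and stays there under $(YX)^{p^\ell}$ — which means the second coordinate is always the unit, and I normalize by dividing by it. This is where the ramification appears: after $p^{\ell_0(m)}$ steps the first coordinate $m$ returns to itself, but the second coordinate has been multiplied by the unit $(-1)^{p^{\ell_0(m)}}(1+\omega_{\ell_0(m)}(\lambda - m))$ — exactly the expression appearing in the definition of $\ell_0'(m)$. Normalizing $(m, \text{unit})$ to $(m/\text{unit}, 1)$, the edge is fixed in $\mathcal{P}(R)$ after $p^{\ell_0(m)}$ steps, giving region size $p^{\ell_0(m)}$; but in $\mathcal{C}(R)$ (where only the $\langle t\rangle$-ambiguity is quotiented, not all of $R^*$) the edge is fixed only after $p^{\ell_0(m)+\ell_0'(m)}$ steps, since $\ell_0'(m)$ is by definition the least $\ell$ making that accumulated unit land in $\langle t\rangle$. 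The ramification index of $\mathcal{C}(R)\to\mathcal{P}(R)$ over this region is therefore $p^{\ell_0'(m)}$, giving weight $w = p^{-\ell_0'(m)}$.

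The main obstacle I anticipate is the first item's algebraic identification: verifying that $\lambda r - 1$ (for $r$ ranging over all of $R$, including $r\in\mathfrak{m}$ and $r\in R^*$) is always associate, in the sense controlled by the annihilator conditions, to an element $u(u-\lambda)$ with $u$ a unit, so that the single value $\ell_0 = \ell_0(u)$ governs every region $\operatorname{pc}(1,r)$ uniformly regardless of $r$. This requires using the specific structure of $\mathfrak{m} = \langle p, t+1\rangle$ and the relation $\omega_\ell = \omega_{\ell-1}\delta_\ell$, together with the splitting $R^* = \mathfrak{m}^*\oplus\Bbbk^*$ from Remark~\ref{rem:decompose}; the rest is a careful but routine induction on $\ell$ using $(1+\omega_{\ell+1}(\lambda-m)) = (1+\omega_\ell(\lambda-m))^p$.
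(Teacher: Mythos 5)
Your proposal is correct and follows essentially the same route as the paper: iterate the closed form for $(YX)^{p^\ell}$ on the normalized representatives $\operatorname{pc}(1,r)$ and $\operatorname{pc}(m,1)$, detect the return time in $\mathcal{P}(R)$ versus $\mathcal{C}(R)$, and read off the sizes and weights from the definitions of $\ell_0$, $\ell_0(m)$, $\ell_0'(m)$. The one ``obstacle'' you anticipate in item~1 dissolves immediately: since $\lambda r\in\mathfrak{m}$, the element $\lambda r-1$ is a unit, so $\omega_\ell(\lambda r-1)=0$ if and only if $\omega_\ell=0$, which by the remark following the definition of $\ell_0$ holds exactly for $\ell\geq\ell_0$ --- no identification with an element of the form $u(u-\lambda)$ is needed.
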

\begin{proof}
As in the proof of Lemma~\ref{lem:w-reg}, one has
\begin{equation*}
\operatorname{pc}(1,r)\cdot(YX)^{p^\ell} = (-1)^{p^\ell}\cdot\operatorname{pc}(1, (1+\omega_\ell\lambda)\cdot r - \omega_\ell) = \operatorname{pc}(1, r+\omega_\ell(\lambda r - 1)).
\end{equation*}
Clearly, $\operatorname{pc}(1, r+\omega_\ell(\lambda r - 1)) = \operatorname{pc}(1,r)$ if and only if $\omega_\ell(\lambda r - 1) = 0$. The latter is equivalent to $\omega_\ell = 0$ since $(\lambda r - 1)\in R^*$, which holds if and only if $\ell \geq \ell_0$. Moreover, the equality $\operatorname{c}(1,r)\cdot(YX)^{p^{\ell_0}} = \operatorname{c}(1,r)$ similarly holds. This finishes the proof of item~\ref{lem:r-reg:bulk}. Similarly, 
\begin{equation*}
\operatorname{pc}(m,1)\cdot(YX)^{p^\ell} = (-1)^{p^\ell}\cdot\operatorname{pc}(m, 1+\omega_\ell\lambda - \omega_\ell m) = \operatorname{pc}(m, 1+\omega_\ell(\lambda - m)).
\end{equation*}
Now, $\operatorname{pc}(m, 1+\omega_\ell(\lambda - m)) = \operatorname{pc}(m,1)$ if and only if $(1+\omega_\ell(\lambda - m))\cdot m = m$, that is, $\omega_\ell \cdot m(m-\lambda) = 0$, which holds precisely for $\ell\geq \ell_0(m)$. Then,
\begin{equation*}
\operatorname{c}(m,1)\cdot(YX)^{p^{\ell_0(m)+\ell}} = (-1)^{p^{\ell_0(m)+\ell}}\cdot\operatorname{c}(m, 1+\omega_{\ell_0(m)+\ell}\cdot(\lambda - m)).
\end{equation*}
But, $(-1)^{p^{\ell_0(m)+\ell}}\cdot\operatorname{c}(m, 1+\omega_{\ell_0(m)+\ell}\cdot(\lambda - m)) = \operatorname{c}(m,1)$ if and only if $(-1)^{p^{\ell_0(m)+\ell}}\cdot(1+\omega_{\ell_0(m)+\ell}\cdot(\lambda - m))$ is in $\langle t \rangle \subset R^*$, which holds precisely for $\ell\geq \ell_0'(m)$. This finishes the proof of item~\ref{lem:r-reg:small}.
\end{proof}

\section{The Proof of the Main Theorem}
\label{sec:proof}

In this section, we prove Theorem~\ref{thm:main} by checking the orbits $\Omega\subset\mathcal{C}(W)$ for wheels $W$ and the orbits $\Omega\subset\mathcal{P}(R)$ for wheels $R$, case by case. In each subcase, we write one main statement in italics at the beginning. If the statement is of the \enquote{infinite} kind, e.g. one which claims that there is no genus-zero orbit $\Omega$ in $\mathcal{P}(R)$ if $R$ is in a certain infinite class of rings which may be characterized by a certain condition, then we explicitly prove the statement. However, if the statement is of the \enquote{finite} kind, e.g. about properties of a particular orbit $\Omega\subset\mathcal{C}(W)$ for a particular wheel $W$, we leave its proof to the reader. During the proof, we use all of the lemmas in the previous sections implicitly.
%Indeed, it is easy to verify them using the information contained in the previous sections.

We begin with an observation: For any prime $p$, there is only one orbit in $\mathcal{P}(\Bbbk)$. Because there are only two regions in $\mathcal{P}(\Bbbk)$, one of which has size $p$ and the other has size 1; then, it is only left to show that the edge in the region of size 1, namely $\operatorname{pc}(0,1)$, is not fixed by $X\in\Gamma$.

\paragraph{\textbf{The case $p > 7$:}}
\emph{There is no genus-zero orbit $\Omega$}. Since any ring or wheel admits an epimorphism onto $\Bbbk$, it is enough to show $\chi(\mathcal{P}(\Bbbk))\leq 0$. There is no complete monovalent vertex in $\mathcal{P}(\Bbbk)$, hence $\chi(\mathcal{P}(\Bbbk)) = -\frac{p+1}{6} + 2 \leq 0$. \qed

\paragraph{\textbf{The case $p = 7$:}}
\emph{There is no genus-zero orbit $\Omega$, except in $\mathcal{P}(R)$ for $R = \Bbbk$ or $R = \Bbbk[\lambda]\slash \lambda^2$}. First, consider wheels. Since any wheel admits an epimorphism onto $\Bbbk^2$, it is enough to observe that all orbits in $\mathcal{C}(\Bbbk^2)$ are of positive genus, because all regions are of size 7. Now, consider rings. Note that any ring other than the aforementioned ones admits an epimorphism onto at least one of these: $\Bbbk[\lambda]\slash \lambda^3$, $\mathbb{Z}_{49}[\lambda]\slash (\lambda-7k)$ and $\mathbb{Z}_{49}[\lambda]\slash\langle 7\lambda, \lambda^2-7k \rangle$ for some $k = 0, 1, \ldots, 6$. Then, one simply checks that there is no genus-zero orbit in $\mathcal{P}(\Bbbk[\lambda]\slash \lambda^3)$, $\mathcal{P}(\mathbb{Z}_{49}[\lambda]\slash (\lambda-7k))$ or $\mathcal{P}(\mathbb{Z}_{49}[\lambda]\slash\langle 7\lambda, \lambda^2-7k \rangle)$ for any value of $k$. As for the exceptional rings, there are three orbits in $\mathcal{P}(\Bbbk[\lambda]\slash \lambda^2)$, each of which is genus-zero. As $\Omega$ ranges these orbits, $\mathcal{H}(\Omega)$ ranges $H_1(7,1)$, $H_2(7,1)$, and $H_3(7,1)$. Finally, $\mathcal{H}(\mathcal{P}(\Bbbk)) = H(7,0)$. \qed

\paragraph{\textbf{Wheels with $p = 5$:}}
\emph{For a wheel $W$ not annihilated by $\omega_1$, there is no genus-zero orbit in $\mathcal{C}(W)$.} Let $W$ be such a wheel and let $\Omega$ be an orbit in $\mathcal{C}(W)$; we will show that the number of regions in $\Omega$ is less than $\frac{1}{6}\cdot|\Omega|$, which lets one conclude $g(\Omega) > 0$ since there is no monovalent vertex in $\Omega$. Note that the size of any region is 5 or at least 25. If an edge $\operatorname{c}(a_1,a_2)$ is contained in a region of size 5, then $\operatorname{c}(a_1,a_2) = \operatorname{c}(a_1,a_2+\omega_1(\lambda\cdot a_2 - a_1))$. This implies either $\omega_1(a_1 - \lambda\cdot a_2) = 0$ or $((1+\lambda)^{k}-1)\cdot a_1 = 0$ for some $k$ for which $(1+\lambda)^{k}-1$ does not annihilate $W$. Let $n$ be greatest such that $\omega_n\lambda = (1+\lambda)^{5^n}-1$ does not annihilate $W$. The equation $((1+\lambda)^{k}-1)\cdot a_1 = 0$ implies $\omega_n\lambda\cdot a_1 = 0$. Consider the images of the submodules $\{a\in W\mid \omega_1\cdot a= 0\}$ and $\{a\in W\mid \omega_n\lambda\cdot a= 0\}$ under an epimorphism $W\twoheadrightarrow\Bbbk^2$. These images are subspaces of dimension at most 1 since these equations do not identically hold in $W$, and $W$ is generated by any two elements which project to linearly independent vectors in $\Bbbk^2$. Therefore, if an edge $\operatorname{c}(a_1,a_2)$ is contained in a region of size 5, then $a_1$ projects into one of these two 1-dimensional subspaces. Overall, out of the 24 nonzero vectors in $\Bbbk^2$, at most 8 of them (union of two distinct 1-dimensional subspaces) can be equal to the projection of $a_1$. Let $\Omega'$ be the image of $\Omega$ under the covering $\mathcal{C}(W) \rightarrow \mathcal{C}(\Bbbk^2)$. The restriction on $a_1$ is equivalently expressed as follows: at most 4 out of the 12 regions in $\Omega'$ can be the image of the region which contains $\operatorname{c}(a_1,a_2)$. This shows that at least two thirds of the edges in $\Omega$ are contained in regions of size at least 25. Hence, the number of regions is bounded by $\frac{1}{5}\cdot\frac{1}{3}\cdot|\Omega| + \frac{1}{25}\cdot\frac{2}{3}\cdot|\Omega| < \frac{1}{6}\cdot|\Omega|$.\qed

\medskip

\emph{For a wheel $W$ annihilated by $\omega_1$ and $\Omega$ an orbit in $\mathcal{C}(W)$, one has $\mathcal{H}(\Omega) = \tilde{H}(5)$ if $W = \Bbbk^2$, and $\mathcal{H}(\Omega) = \tilde{I}(5)$ otherwise.} First, observe that all regions in $\mathcal{C}(W)$ are of size 5, hence any orbit is of genus zero and is isomorphic to its image in $\mathcal{C}(\Bbbk^2)$. Moreover, $\mathbb{L} := (\mathbb{Y}\mathbb{X})^5$ acts as $t^5$ on all pairs in $\mathcal{E}(W)$ since $\mathbb{L}\equiv -1\equiv t^5\cdot 1 \pmod{\omega_1}$. Hence, $\mathcal{H}(\Omega)$ is uniquely determined once the order of the $t$-action on $W$, i.e. the depth $\operatorname{d}(\mathcal{H}(\Omega))$, is known. Since $t^5\equiv -1\pmod{\omega_1}$, the order is either 2 or 10; in fact, it is 2 if and only if $W = \Bbbk^2$.\qed

\paragraph{\textbf{Rings with $p = 5$:}}
\emph{For a ring $R$ in which $\omega_1\neq 0$, there is no genus-zero orbit in $\mathcal{P}(R)$ unless $R = \mathbb{Z}_{25}[\lambda]\slash (\lambda-5k)$ for some $k = 1,2,3,4$.} First suppose that $\lambda\not\in R\omega_1$. By replacing $R$ with $R\slash\langle 5\omega_1, \omega_1\lambda \rangle$ if necessary, we assume that $5\omega_1 = \omega_1\lambda = 0\in R$, hence $\omega_2 = 0$. A region in $\mathcal{P}(R)$ which is of size 1 and weight 1 must consist of an edge $\operatorname{pc}(m, 1)$ where $1+\lambda-m \in \langle 1+\lambda \rangle$ and $m(m-\lambda) = 0$. The first condition alone shows that all such edges are distinct modulo $\omega_1$, because $\langle 1+\lambda \rangle = \{1, 1+\lambda, \ldots, (1+\lambda)^4\}$ and $\lambda\not\equiv 0\pmod{\omega_1}$. Now, let $\Omega$ be an orbit in $\mathcal{P}(R)$ and $\Omega'$ be its image under the 5-fold covering $\mathcal{P}(R) \rightarrow \mathcal{P}(R\slash\omega_1)$. If $\Omega$ contains regions of size 1 and weight 1, they all project to distinct regions in $\Omega'$, hence there are at most $d$ such regions where $d$ is the degree of the covering $\Omega' \rightarrow \mathcal{P}(\Bbbk)$. This is because all such regions must project to the unique region of size 1 in $\mathcal{P}(\Bbbk)$. On the other hand, note that there are exactly $d$ regions of size 5 in $\Omega'$; namely, those which project to the unique region of size 5 in $\mathcal{P}(\Bbbk)$. The corresponding regions in $\Omega$ are of size 25, which implies that the covering $\Omega\rightarrow\Omega'$ is 5-fold and that there are exactly $d$ regions of size 25. Finally, the degree of the covering $\Omega \rightarrow \mathcal{P}(\Bbbk)$ is $5d$, hence $|\Omega| = 30d$. As a consequence of all of this, the sum of weights over the regions in $\Omega$ is bounded above by $d + \frac{4d}{5} + d < \frac{|\Omega|}{6}$. Since there is no complete monovalent vertex in $\mathcal{P}(R)$, one concludes $\chi(\Omega) < 0$.

Now suppose that $\lambda = \omega_1r$ for some $r\in R$. Note that $\omega_1 = 5 + \lambda\theta$ for $\theta\in\mathfrak{m}$, hence $\lambda = 5r(1-\theta r)^{-1}$. This requires that $R = \mathbb{Z}_{5^n}$ for some $n\geq 2$ and $\lambda = 5k$. Then, it is only left to check that there is only one orbit in $\mathcal{P}(R)$ whose Euler characteristic is negative when $n=3$ with any value of $\lambda$ or when $n=2$ with $\lambda=0$. When $n = 2$ and $\lambda = 5k$ for some $k = 1,2,3,4$, one has $\chi(\mathcal{P}(R)) = 1$ and $\mathcal{H}(\mathcal{P}(R)) = H(25; a)$ where $a = -5k-1$.\qed

\medskip

\emph{For a ring $R$ in which $\omega_1 = 0$, except the cases $R = \Bbbk$ and $R = \Bbbk[\lambda]\slash \lambda^2$, there is one orbit $\Omega_0$ in $\mathcal{P}(R)$ with $\mathcal{H}(\Omega_0) = I(5,1)$ and one has $\mathcal{H}(\Omega) = \tilde{I}(5)$ for any other orbit $\Omega$.} First, note that $\lambda^2\neq 0$, otherwise $R$ must be one of the two exceptional rings. Hence, there are only two regions in $\mathcal{P}(R)$ with size 1 and weight 1; namely, $\operatorname{pc}(0, 1)$ and $\operatorname{pc}(\lambda, 1)$. Because, these are the only values $m$ which satisfy $1+\lambda-m \in \langle 1+\lambda \rangle = \{1, 1+\lambda, \ldots, (1+\lambda)^4\}$ and $m(m-\lambda) = 0$. Moreover, these two edges are in the same orbit $\Omega_0$. Then, it is easy to deduce that $I(5,1) := \mathcal{H}(\Omega_0)$ is uniquely determined, by arguments very similar to those in the case of the wheels. Any other orbit $\Omega$ consists of regions which are either of size 5 or of size 1 but weight $\frac{1}{5}$. Hence, in the preimage of $\Omega$ in $\mathcal{C}(R)$, all regions are of size 5. Then, by similar arguments again, one deduces that $\mathcal{H}(\Omega) = \tilde{I}(5)$, which was already defined. As for the exceptional rings, there are three orbits in $\mathcal{P}(\Bbbk[\lambda]\slash \lambda^2)$ each of which is genus-zero. As $\Omega$ ranges these orbits, $\mathcal{H}(\Omega)$ ranges $I(5,1)$, $H_1(5,1)$, and $H_2(5,1)$. Finally, $\mathcal{H}(\mathcal{P}(\Bbbk)) = H(5,0)$. \qed

\paragraph{\textbf{Wheels with $p = 3$:}}

\textit{For a wheel $W$ not annihilated by $\omega_1$, there is no orbit of genus zero in $\mathcal{C}(W)$ unless $W = \mathbb{Z}_9\cdot e_1 \oplus \mathbb{Z}_3\cdot e_2$ with $\lambda\cdot e_1 = 0$ and $\lambda\cdot e_2 = 3e_1$.} The proof is similar to the case of $p=5$. Let $\Omega$ be an orbit in $\mathcal{C}(W)$ and $\Omega'$ be its image in $\mathcal{C}(\Bbbk^2)$. The size of any region in $\Omega$ is 3 or at least 9, and only 2 out of the 4 regions in $\Omega'$ can be the image of a region of size 3 in $\Omega$. This comes from considering the images of the submodules $\{a\in W\mid \omega_1\cdot a= 0\}$ and $\{a\in W\mid \omega_n\lambda\cdot a= 0\}$ in $\Bbbk^2$, as in the case $p=5$. First suppose that at most 1 out of the 4 regions in $\Omega'$ is the image of a region of size 3 in $\Omega$. Then, the number of regions in $\Omega$ is less than or equal to $\frac{1}{3}\cdot\frac{1}{4}\cdot|\Omega| + \frac{1}{9}\cdot\frac{3}{4}\cdot|\Omega| = \frac{1}{6}\cdot|\Omega|$, hence $g(\Omega) > 0$. Now suppose that exactly 2 out of the 4 regions are as such. In other words, $\Omega$ contains edges $\operatorname{c}(a_1, a_2)$ and $\operatorname{c}(b_1, b_2)$ such that both of these edges lie in regions of size 3 and $a_1$ and $b_1$ project to linearly independent vectors in $\Bbbk^2$. Then, the images of the submodules $\{a\in W\mid \omega_1\cdot a= 0\}$ and $\{a\in W\mid \omega_n\lambda\cdot a= 0\}$ must be distinct 1-dimensional subspaces, such that $a_1$ and $b_1$ project into these. Thus one must have $n = 0$, otherwise the latter subspace contains the former. Now, w.l.o.g, let $a_1$ and $b_1$ project into the images of $\{a\in W\mid \omega_1\cdot a= 0\}$ and $\{a\in W\mid \lambda\cdot a= 0\}$ respectively. Thus, $\omega_1(a_1 - \lambda\cdot a_2) = 0$ and $((1+\lambda)^{k}-1)\cdot b_1 = 0$, $((1+\lambda)^{k}-1)\cdot b_2 = \omega_1(\lambda\cdot b_2 - b_1)$ for some $k$ coprime to 3. Note that $(1+\lambda)^{k}-1 = \lambda s$ for $s\in \Lambda\setminus\mathfrak{m}$. Then, $\lambda\cdot b_1 = 0$ and $\omega_1\cdot b_1 = (\omega_1 - s)\lambda\cdot b_2$. The last equation implies, in particular, that $\lambda^2$ annihilates $W$ because $\lambda^2\cdot b_2 = (\omega_1 - s)^{-1}\cdot\omega_1\lambda\cdot b_1 = 0$. Here, $(\omega_1 - s)^{-1}$ is meaningful because even though $(\omega_1 - s)$ may not be invertible in $\Lambda$, its action on $W$ is invertible. This allows one to replace $\omega_1 = 3 + 3\lambda + \lambda^2$ by $3(1+\lambda)$ in the equations. Hence, by introducing the brief notation $a_1' = (1+\lambda)\cdot(a_1 - \lambda\cdot a_2)$ and $b_2' = (1+\lambda)^{-1}\cdot(\omega_1-s)\cdot b_2$, we re-express the equations as follows: $3a_1' = 0$, $\lambda\cdot b_1 = 0$ and $3b_1 = \lambda\cdot b_2'$. Since $a_1'$ and $b_1$ generate $W$, one can write $b_2' = \phi_1\cdot a_1' + \phi_2\cdot b_1$ for some $\phi_1, \phi_2\in\Lambda$; moreover, $\phi_1\not\in \mathfrak{m}$ since $b_1$ and $b_2'$ project to linearly independent vectors in $\Bbbk^2$. Thus, $3b_1 = \lambda\cdot b_2' = \lambda\phi_1\cdot a_1'$. Finally, let $e_1$ denote $b_1$ and $e_2$ denote $\phi_1\cdot a_1'$, then $e_1$ and $e_2$ generate $W$ and the equations take the form $3e_2 = 0, \lambda\cdot e_1 = 0, 3e_1 = \lambda\cdot e_2$; which shows that $W$ is the exceptional wheel introduced in the beginning. For this exceptional wheel $W$, there are two orbits in $\mathcal{C}(W)$. As $\Omega$ ranges these orbits, $\mathcal{H}(\Omega)$ ranges $\tilde{H}_1(9)$ and $\tilde{H}_2(9)$. \qed

\medskip

For a wheel $W$ annihilated by $\omega_1$ and $\Omega$ an orbit in $\mathcal{C}(W)$, one has $\mathcal{H}(\Omega) = \tilde{H}(3)$ if $W = \Bbbk^2$, and $\mathcal{H}(\Omega) = \tilde{I}(3)$ otherwise. This is proven in a way completely analogous to the case $p = 5$. \qed

\paragraph{\textbf{Rings with $p = 3$:}} \textit{For a ring $R$ in which $\omega_2\neq 0$, there is no genus-zero orbit in $\mathcal{P}(R)$ unless $R = \mathbb{Z}_{27}[\lambda]\slash (\lambda-3k)$ for some $k = 1,2,4,5,7,8$.} First suppose that $\omega_1\lambda\not\in R\omega_2$. By replacing $R$ with $R\slash\langle 3\omega_2, \omega_2\lambda \rangle$ if necessary, we assume that $3\omega_2 = \omega_2\lambda = 0\in R$, hence $\omega_3 = 0$. If $\mathcal{P}(R)$ contains a complete monovalent vertex, there is $r\in\langle t\rangle$ such that $r^2+r+1 = 0$. The candidates for this equation are $\{1, 1+\omega_1\lambda, (1+\omega_1\lambda)^2\}$, but $(1+\omega_1\lambda)^2 + (1+\omega_1\lambda) + 1 = \delta_2 \neq 0$. Hence, one necessarily has $3 = 1 + 1 + 1 = 0\in R$. In this case, $\omega_2 = \lambda^8$, thus $R = \Bbbk[\lambda]\slash \lambda^9$; then, there is no genus-zero orbit in $\mathcal{P}(R)$. Henceforth, we assume that $\mathcal{P}(R)$ contains no complete monovalent vertex. Let $\Omega$ be an orbit in $\mathcal{P}(R)$, let $\Omega'$ be its image in $\mathcal{P}(R\slash\omega_2)$ and let $d$ be the degree of the covering $\Omega'\rightarrow \mathcal{P}(\Bbbk)$. As in the case $p=5$, $\Omega$ contains at most $d$ regions of size 1 and weight 1, it contains exactly $\frac{d}{3}$ regions of size 27 and the covering $\Omega\rightarrow\Omega'$ is 3-fold. Consequently, the sum of weights over the regions in $\Omega$ is less than or equal to $d + \frac{2d}{3} + \frac{d}{3} = \frac{|\Omega|}{6}$, hence $\chi(\Omega)\leq 0$.

Now suppose that $\omega_1\lambda = \omega_2r$ for some $r\in R$, but $\lambda\not\in R\omega_1$. Note that $\delta_2 = 3 + \lambda\theta$ for $\theta\in\mathfrak{m}$, hence $\omega_1\lambda = \omega_1\delta_2r$ implies $\omega_1\lambda = 3\omega_1r(1-\theta r)^{-1}$. This implies that $\omega_1\lambda = 3k\omega_1$ for some integer $k$.
Therefore, $\omega_2 = \omega_1\delta_2 = 3\omega_1(1+k\theta)$. We may assume $3\omega_2 = 0\in R$, hence $9\omega_1 = 0$ and $\omega_3 = 0$. As in the previous paragraph, there is no complete monovalent vertex in $\mathcal{P}(R)$, since $3\neq 0$, $\delta_2\neq 0$ and $\omega_1\neq 0$. Now, let $\Omega$ be an orbit in $\mathcal{P}(R)$, let $\Omega'$ be its image in $\mathcal{P}(R\slash\omega_1)$ and let $d$ be the degree of the covering $\Omega'\rightarrow \mathcal{P}(\Bbbk)$. Since $\lambda\not\in R\omega_1$, $\Omega$ contains at most $3d$ regions of size 1 and weight 1, it contains exactly $d$ regions of size 27 and the covering $\Omega\rightarrow\Omega'$ is 9-fold. Therefore, the sum of weights over the regions in $\Omega$ is less than or equal to $3d + \frac{6d}{3} + d = \frac{|\Omega|}{6}$, hence $\chi(\Omega)\leq 0$. Finally, suppose that $\lambda\in R\omega_1$. As in the case $p =5$, this requires that $R=\mathbb{Z}_{3^n}$ for some $n\geq 3$ and $\lambda = 3k$. Then, it is only left to check that there is one orbit in $\mathcal{P}(R)$ whose Euler characteristic is negative when $n=4$ with any value of $\lambda$ or when $n=3$ with $\lambda=9k$. When $n=3$ and $\lambda = 3k$ for some $k = 1,2,4,5,7,8$, one has $\chi(\mathcal{P}(R)) = 2$ and $\mathcal{H}(\mathcal{P}(R)) = H(27; a)$ where $a=-3k-1$.\qed

\medskip

\textit{For a ring $R$ in which $\omega_2 = 0$ but $\omega_1\lambda^2 \neq 0$, there is no genus-zero orbit in $\mathcal{P}(R)$.} Let $\Omega$ be an orbit in $\mathcal{P}(R)$. First suppose that $\Omega$ contains no region of size 1 and weight 1 and it contains no monovalent vertex. Let $d$ be the degree of the covering $\Omega\rightarrow\mathcal{P}(\Bbbk)$. Then, $\Omega$ contains exactly $\frac{d}{3}$ regions of size 9, hence the sum of weights over the regions in $\Omega$ is less than or equal to $\frac{d}{3} + \frac{d}{3} = \frac{|\Omega|}{6}$, thus $\chi(\Omega)\leq 0$. Henceforth, we assume that $\Omega$ contains either a region of size 1 and weight 1 or a complete monovalent vertex. Note that $\omega_1\lambda^2\not\in R\cdot 3\omega_1$. Otherwise, one has $\omega_1\lambda^2 = 3\omega_1r$ for some $r\in R$, then $\delta_2 = 3(1+ \omega_1\lambda + \omega_1^2r)$, hence $\omega_1\lambda^2 = 3\omega_1r = \delta_2\omega_1(1+ \omega_1\lambda + \omega_1^2r)^{-1}r = 0$, since $\delta_2\omega_1 = \omega_2 = 0$. Thus, by replacing $R$ with the appropriate quotient if necessary, we assume that $3\omega_1 = \omega_1\lambda^3 = 0\in R$. Then, note that $\lambda^2\not\in R\cdot 3$, hence there is an epimorphism $R \twoheadrightarrow \Bbbk[\lambda]\slash\lambda^3$. Now, let $\Omega'$ be the image of $\Omega$ in $\mathcal{P}(\Bbbk[\lambda]\slash\lambda^3)$. There are three orbits in $\mathcal{P}(\Bbbk[\lambda]\slash\lambda^3)$ two of which contain regions of size 1 and weight 1, but no monovalent vertex; while the other one contains complete monovalent vertices, but no region of size 1. We cover the two cases separately. Let $d$ be the degree of the covering $\Omega\rightarrow\Omega'$.

For the former case, first observe that there are only two regions in $\mathcal{P}(R)$ with size 1 and weight 1; namely, $\operatorname{pc}(0, 1)$ and $\operatorname{pc}(\lambda, 1)$. Because, these are the only values $m$ which satisfy $1+\lambda-m \in \langle 1+\lambda \rangle = \{1, 1+\lambda, \ldots, (1+\lambda)^8\}$ and $m(m-\lambda) = 0$. These two edges project into distinct orbits in $\mathcal{P}(\Bbbk[\lambda]\slash\lambda^3)$, hence $\Omega$ contains only one of them. Assume that $\operatorname{pc}(0, 1)\in\Omega$; we will not treat the other case since it is completely analogous. Then, $\Omega'$ contains two more regions of size 1, but these are of weight $\frac{1}{3}$. It is easy to verify that the preimage of any of these two regions under the covering $\Omega\rightarrow\Omega'$ contains a region of size greater than 1, hence $d>1$. We will now show that this preimage contains no region of size 1 and weight $\frac{1}{3}$ or of size 3 and weight 1. For this, it is enough to observe that $(1 + \omega_1(\lambda - m)) \not\in\langle 1+\lambda\rangle$ for any $m$ which projects to $\pm \lambda^2\in \Bbbk[\lambda]\slash\lambda^3$. Indeed, for such $m$, one has $1 + \omega_1(\lambda - m) = 1 + \omega_1\lambda + (\pm\omega_1\lambda^2) \not\in\langle 1+\lambda\rangle$. Using the observations above, one can bound the sum of weights over the regions of $\Omega$ by $\frac{d+2}{3} + \frac{2d}{9} + d < 2d = \frac{|\Omega|}{6}$, hence $\chi(\Omega)\leq 0$. In the latter case, all regions in $\Omega$ are of size 9. Because, if $m$ projects to $-\lambda\in\Bbbk[\lambda]\slash\lambda^2$, then $m(m-\lambda)$ projects to $-\lambda^2\in\Bbbk[\lambda]\slash\lambda^3$, hence $\omega_1\cdot m(m-\lambda)\neq 0$. In particular, one has $d\geq 3$, hence $|\Omega|\geq 36$. Then, since there are at most 3 complete monovalent vertices in $\Omega$, one deduces $\chi(\Omega)\leq -\frac{|\Omega|}{6} + \frac{|\Omega|}{9} + 2 \leq 0$.\qed

\medskip

\textit{For a ring $R$ in which $\omega_2 = 0$ and $\omega_1\lambda^2 = 0$, but $\omega_1\neq 0$, except the cases $R = \Bbbk[\lambda]\slash\lambda^3$, $R = \Bbbk[\lambda]\slash\lambda^4$, $R = \mathbb{Z}_9[\lambda]\slash\lambda^2$, $R = \mathbb{Z}_9[\lambda]\slash\langle3\lambda, \lambda^2\rangle$, $R = \mathbb{Z}_9[\lambda]\slash(\lambda - 3k)$ for some $k = 0,1,2$, there are exactly two genus-zero orbits in $\mathcal{P}(R)$.} As $\Omega$ ranges these orbits, $\mathcal{H}(\Omega)$ ranges $I_1(9,1)$ and $I_2(9,1)$ if $\omega_1\lambda = 0$, and it ranges $I_1(9,2)$ and $I_2(9,2)$ otherwise. For the proof, first note that $3\neq 0$ and $\lambda^2\neq 0$; otherwise $R$ must be one of the exceptional rings. Moreover, $\delta_2 = 3 + 3\omega_1\lambda + \omega_1^2\lambda^2 = 3(1+\omega_1\lambda)$, hence $3\omega_1 = \delta_2\omega_1(1+\omega_1\lambda)^{-1} = 0$. This, in turn, implies $\delta_2 = 3 + 3\omega_1\lambda = 3$. Therefore, there is no complete monovalent vertex in $\mathcal{P}(R)$ since $3\neq 0$, $\delta_2 = 3 \neq 0$ and $\omega_1\neq 0$. Thus, if an orbit $\Omega$ in $\mathcal{P}(R)$ contains no region of size 1 and weight 1, one has $\chi(\Omega)\leq 0$ as above. Hence, let $\Omega$ contain such a region $\operatorname{pc}(m, 1)$. Let $\Omega'$ be the image of $\Omega$ and $\operatorname{pc}(m', 1)$ be the image of $\operatorname{pc}(m, 1)$ in $\mathcal{P}(R\slash\omega_1\lambda)$. Since $\operatorname{pc}(m', 1)$ is also a region of size 1 and weight 1, one has $1+\lambda-m'\in\{1, 1+\lambda, (1+\lambda)^2\}$, i.e. $m'\in\{0, \lambda, -\lambda(1+\lambda)\}$. However, if $m' = -\lambda(1+\lambda)$, then $m(m-\lambda)\neq 0$ because $\lambda^2\neq 0$, hence $\Omega'$ contains $\operatorname{pc}(0, 1)$ or $\operatorname{pc}(\lambda, 1)$. Now, observe that $\lambda \not\in R\cdot 3 + R\cdot \omega_1\lambda$, hence there is an epimorphism $R\slash\omega_1\lambda \twoheadrightarrow \Bbbk[\lambda]\slash\lambda^2$. (Suppose for the contrary that $\lambda = 3r_1 + \omega_1\lambda r_2$. Then, $\lambda = 3r_1(1-\omega_1r_2)^{-1}$, which implies $\lambda = 3k$ for an integer $k$. Then, $\omega_1 = 3(1+3k+3k^2)$, hence $9=0$ because $3\omega_1 = 0$. Thus, $R$ is one of the exceptional rings.) Now, consider the induced covering $\mathcal{P}(R\slash\omega_1\lambda)\rightarrow \mathcal{P}(\Bbbk[\lambda]\slash\lambda^2)$. The edges $\operatorname{pc}(0, 1)\in\mathcal{P}(R\slash\omega_1\lambda)$ and $\operatorname{pc}(\lambda, 1)\in\mathcal{P}(R\slash\omega_1\lambda)$ project into distinct orbits in $\mathcal{P}(\Bbbk[\lambda]\slash\lambda^2)$, thus $\Omega'$ contains only one of them. Then, one can verify as before that $\mathcal{H}(\Omega')$ is uniquely determined in either of the two cases; so it ranges $I_1(9, 1)$ and $I_2(9, 1)$. Clearly, if $\omega_1\lambda = 0\in R$, then $\Omega = \Omega'$, hence this case is complete. Otherwise, the covering $\mathcal{P}(R)\rightarrow \mathcal{P}(R\slash\omega_1\lambda)$ is of degree 3. Then, one simply checks that all three edges in the preimage of $\operatorname{pc}(0, 1)\in\mathcal{P}(R\slash\omega_1\lambda)$ or $\operatorname{pc}(\lambda, 1)\in\mathcal{P}(R\slash\omega_1\lambda)$ are in the same orbit. As before, one can deduce that $\mathcal{H}(\Omega)$ is uniquely determined in both cases; so it ranges $I_1(9, 2)$ and $I_2(9, 2)$.

If $R$ is one of the exceptional rings of the previous paragraph, all orbits in $\mathcal{P}(R)$ are genus-zero. There are three orbits in $\mathcal{P}(\Bbbk[\lambda]\slash \lambda^3)$. As $\Omega$ ranges these orbits, $\mathcal{H}(\Omega)$ ranges $I_1(9, 1)$, $I_2(9, 1)$ and $H'(9,1)$. There are five orbits in $\mathcal{P}(\Bbbk[\lambda]\slash \lambda^4)$. As $\Omega$ ranges these orbits, $\mathcal{H}(\Omega)$ ranges $I_1(9, 2)$, $I_2(9, 2)$, $H_1'(9,2)$, $H_2'(9,2)$, and $H_3'(9,2)$. There is one orbit in $\mathcal{P}(\mathbb{Z}_9[\lambda]\slash(\lambda - 3k))$. Then, $\mathcal{H}(\mathcal{P}(\mathbb{Z}_9[\lambda]\slash(\lambda - 3k)))$ is equal to $H(9, 0)$ if $k=0$ and $H(9; a)$ with $a=-3k-1$ otherwise. There are three orbits in $\mathcal{P}(\mathbb{Z}_9[\lambda]\slash\langle3\lambda, \lambda^2\rangle)$. As $\Omega$ ranges these orbits, $\mathcal{H}(\Omega)$ ranges $I_1(9, 1)$, $I_2(9, 1)$ and $H(9,1)$. There are five orbits in $\mathcal{P}(\mathbb{Z}_9[\lambda]\slash\lambda^2)$. As $\Omega$ ranges these orbits, $\mathcal{H}(\Omega)$ ranges $I_1(9, 2)$, $I_2(9, 2)$, $H_1(9,2)$, $H_2(9,2)$, and $H_3(9,2)$.\qed

%\medskip

\emph{For a ring in which $\omega_1 = 0$, except the cases $R = \Bbbk$ and $R = \Bbbk[\lambda]\slash \lambda^2$, there is one orbit $\Omega_0$ in $\mathcal{P}(R)$ with $\mathcal{H}(\Omega_0) = I_1(3,1)$, one orbit $\Omega_1$ with $\mathcal{H}(\Omega_1) = I_2(3,1)$, and one has $\mathcal{H}(\Omega) = \tilde{I}(3)$ for any other orbit $\Omega$.} The proof is very similar to the case of $p = 5$. In summary, the only regions of size 1 and weight 1 are $\operatorname{pc}(0, 1)$ and $\operatorname{pc}(\lambda, 1)$ and these edges are in distinct orbits. Then, let $\Omega_0$ denote the orbit of $\operatorname{pc}(0, 1)$, and $\Omega_1$ denote the orbit of $\operatorname{pc}(\lambda, 1)$. One can easily verify as before that $\mathcal{H}(\Omega_0)$ and $\mathcal{H}(\Omega_1)$ are uniquely determined; so they are as given. In any other orbit $\Omega$, the regions are either of size 3 or of size 1 and weight $\frac{1}{3}$. Then, it is also easy to verify $\mathcal{H}(\Omega) = \tilde{I}(3)$ as in the case of wheels. As for the exceptional rings, there are three orbits in $\mathcal{P}(\Bbbk[\lambda]\slash \lambda^2)$. As $\Omega$ ranges these orbits, $\mathcal{H}(\Omega)$ ranges $I_1(3,1)$, $I_2(3,1)$, and $H(3,1)$. Finally, $\mathcal{H}(\mathcal{P}(\Bbbk)) = H(3,0)$.\qed

\subsection{The Table in the Main Theorem}
\label{sec:table}

\begin{table}[h!]
\small
\caption{\footnotesize Genus-zero $H := H(A)$ for $\mathfrak{m}$-local $A$ where $m=\langle p, t+1\rangle$ with $p\neq 2$}
\label{tab:groups}
\begin{center}
{\renewcommand{\arraystretch}{1.125}
\begin{tabular}{|l|c|c|c|}
\hline\hline
$H\subset\mathrm{Bu}_3$ & $A(H)$ & $\operatorname{c}(H)\subset\Gamma$ & $\operatorname{d}(H)$ \\
\hline\hline
$H_1(7, 1)$  & \multirow{3}{*}{$\mathbb{F}_7[t]\slash (t+1)^2$} & \multirow{4}{*}{$\Gamma_1(7)$} & $14$ \\
$H_2(7, 1)$  & & & $14$\\
$H_3(7, 1)$  & & & $14$\\
\cline{1-2}
$H(7, 0)$  & $\mathbb{Z}_7 \enskip \text{ where } \enskip t= -1$ & & $2$ \\
\hline\hline
$H_1(5, 1)$  & \multirow{2}{*}{$\mathbb{F}_5[t]\slash (t+1)^2$} & \multirow{4}{*}{$\Gamma_1(5)$} & $10$ \\
$H_2(5, 1)$  & & & $10$ \\
\cline{1-2}
$I(5, 1)$  & $\Lambda\slash (t^4-t^3+t^2-t+1)$ & & $10$ \\
\cline{1-2}
$H(5, 0)$  & $\mathbb{Z}_5 \enskip \text{ where } \enskip t= -1$ & & $2$ \\
\hline
$\tilde{I}(5)$  & $\Lambda^2 \slash (t^4-t^3+t^2-t+1)$ & \multirow{2}{*}{$\Gamma(5)$} & $10$ \\
\cline{1-2}
$\tilde{H}(5)$  & $\mathbb{Z}_5\oplus\mathbb{Z}_5 \enskip \text{ where } \enskip t= -1$ & & $2$ \\
\hline
$H(25; a)^*$  & $\mathbb{Z}_{25} \enskip \text{ where } \enskip t= a$ & $\Gamma_0(25)\cap\Gamma_1(5)$ & $10$ \\
\hline\hline
$H(3, 1)$ & $\mathbb{F}_3[t]\slash (t+1)^2$ & \multirow{4}{*}{$\Gamma_1(3)$} & $6$  \\
\cline{1-2}
$I_1(3, 1)$  & \multirow{2}{*}{$\Lambda\slash (t^2-t+1)$} & & $6$\\
$I_2(3, 1)$ & & & $6$ \\
\cline{1-2}
$H(3, 0)$  & $\mathbb{Z}_3 \enskip \text{ where } \enskip t= -1$ & & $2$ \\
\hline
$\tilde{I}(3)$  & $\Lambda^2 \slash (t^2-t+1)$ & \multirow{2}{*}{$\Gamma(3)$} & $6$\\
\cline{1-2}
$\tilde{H}(3)$  & $\mathbb{Z}_3\oplus\mathbb{Z}_3 \enskip \text{ where } \enskip t= -1$ & & $2$\\
\hline
$H(9; a)^*$  & $\mathbb{Z}_{9} \enskip \text{ where } \enskip t= a$ & $\Gamma_0(9)$ & $6$\\
\hline
$H_1(9, 2)$  & \multirow{3}{*}{$\mathbb{Z}_9[t]\slash (t+1)^2$} & \multirow{9}{*}{$\Gamma_1(9)$} & $18$ \\
$H_2(9, 2)$  & & & $18$\\
$H_3(9, 2)$  & & & $18$\\
\cline{1-2}
$I_1(9, 2)$  & \multirow{2}{*}{$\Lambda \slash \langle 3(t^2-t+1), (t^2-t+1)^2 \rangle$} & & $18$\\
$I_2(9, 2)$  & & & $18$\\
\cline{1-2}
$H(9, 1)$  & $\mathbb{Z}_9[t] \slash \langle 3(t+1), (t+1)^2 \rangle$ & & $6$ \\
\cline{1-2}
$I_1(9, 1)$  & \multirow{2}{*}{$\Lambda \slash \langle 3(t^2-t+1), (t^3+1) \rangle$} & & $6$\\
$I_2(9, 1)$  & & & $6$\\
\cline{1-2}
$H(9, 0)$  & $\mathbb{Z}_{9} \enskip \text{ where } \enskip t= -1$ & & $2$ \\
\hline
$H_1'(9, 2)$  & \multirow{3}{*}{$\mathbb{F}_3[t]\slash (t+1)^4$} & \multirow{4}{*}{$9J^0$} & $18$ \\
$H_2'(9, 2)$  & & & $18$ \\
$H_3'(9, 2)$  & & & $18$\\
\cline{1-2}
$H'(9, 1)$  & $\mathbb{F}_3[t]\slash (t+1)^3$ & & $6$\\
\hline
$\tilde{H}_1(9)$  & \multirow{2}{*}{$\mathbb{Z}_9\oplus\mathbb{Z}_3$ \enskip where \enskip $\tiny t= -\begin{pmatrix} 1 & 3\\ 0 & 1\end{pmatrix}$} & \multirow{2}{*}{$9H^0$} & $6$\\
$\tilde{H}_2(9)$ & & & $6$ \\ 
\hline
$H(27; a)^*$  & $\mathbb{Z}_{27} \enskip \text{ where } \enskip t= a$ & $27A^0$ & $18$ \\
\hline\hline
\end{tabular}}
\end{center}
\end{table}

The first column of Table~\ref{tab:groups} shows the names of the subgroups $H = H(A)$. Each subgroup must be understood to represent its conjugacy class. The subgroups which are marked with a $(^*)$ involve an additional parameter $a\in\mathbb{Z}_{p^k}$ such that $a\equiv -1\pmod p$ and $a\not\equiv -1\pmod {p^2}$ (note that $p^k = 25, 9, 27$). The second column shows $A(H)$, which is well-defined up to Burau equivalence when considered together with the distinguished epimorphism $\Lambda^2\twoheadrightarrow A(H)$, since $H$ is well-defined up to conjugacy. Note that several distinct subgroups correspond to (abstractly) isomorphic modules, but these are not Burau equivalent. In fact, for any $H$ in the table, one has $H = H(A(H))$ by Theorem~\ref{thm:setup}, item~\ref{thm:setup:closure}. The third column shows the subgroups $\operatorname{c}(H)\subset \Gamma$. In all of the cases in the table, $\operatorname{c}(H)$ is a congruence subgroup. However, this cannot be true in general: we know examples with other maximal ideals $\mathfrak{m}$ where $\operatorname{c}(H)$ is not a congruence subgroup. When there is no common notation for $\operatorname{c}(H)$, we use the notation of \citep{congruence}. Finally, the fourth column shows the depth $\operatorname{d}(H)$ (see Section~\ref{sec:braid}).
%Intersections are of positive genus
%The Alexander modules explain part of the notation on the first column: $H$ vs. $I$ means finite vs. infinite, and tilde ($\tilde$) means non-cyclic Alexander module.

\bibliographystyle{plain}% basic style, author-year citations
\bibliography{ref}

\end{document}